\theoremstyle{plain}
\newtheorem{theorem}{Theorem}[section]
\newtheorem{lemma}{Lemma}[section]
\newtheorem{proposition}{Proposition}[section]
\newtheorem{corollary}{Corollary}[section]
\newtheorem{definition}{Definition}[section]
\newtheorem{example}{Example}[section]
\numberwithin{equation}{section}
\theoremstyle{remark}
 \numberwithin{equation}{section}
\def\<{\left < }
\def\>{\right >}
\def\({\left ( }
\def\){\right )}
\def\a{\alpha}
\def\g{\gamma}
\def\e{\eqref}
\def\p{\partial }
\def\x{{\bf x}}
\def\k{\kappa}
\def\cc{{\bf c}}
\begin{document}

\thanks\noindent {International Journal of Mathematics (to appear).}
\vskip.3in

\markboth{B.-Y. Chen}{Classification of Ricci solitons on hypersurfaces}

\title[Classification of Ricci solitons on hypersurfaces]{Classification of Ricci solitons on Euclidean hypersurfaces}

\author[ B.-Y. Chen and S. Deshmukh]{Bang-Yen Chen and Sharief Deshmukh }

 \address{Department of Mathematics\\Michigan State University \\619 Red Cedar Road \\East Lansing, MI 48824--1027, USA}

\email{bychen@math.msu.edu}

 \address{Department of Mathematics\\  King Saud University\\  Riyadh 11451, Saudi Arabia}
 \email{shariefd@ksu.edu.sa}

\begin{abstract} A Ricci soliton $(M,g,v,\lambda)$ on a Riemannian manifold $(M,g)$ is said to have concurrent potential field if its potential field $v$ is a concurrent vector field.  Ricci solitons arisen from concurrent vector fields on Riemannian manifolds were studied recently in \cite{CD2}.  The most important concurrent vector field is the position vector field on Euclidean submanifolds.
In this paper we completely classify Ricci solitons on Euclidean hypersurfaces  arisen from the position vector field of the hypersurfaces. 
\end{abstract}

\keywords{Ricci soliton, Einstein manifold, Euclidean hypersurface, position vector field, concurrent vector field}

 \subjclass[2000]{53C25, 53C40}

\maketitle

\section{Introduction}

A smooth vector field $\xi $ on a Riemannian manifold $(M,g)$ is said to define a {\it Ricci soliton} if it satisfies
\begin{equation}\label{1.1}
\frac{1}{2}{\mathcal L}_{\xi }g+Ric=\lambda g,
\end{equation}
where ${\mathcal L}_{\xi }g$ is the Lie-derivative of the metric tensor $g$ with respect to $\xi $, $Ric$ is the Ricci tensor of $(M,g)$ and $\lambda $ is a constant. Compact Ricci solitons are the fixed points of the Ricci flow: $\frac{\p g(t)}{\p t}=-2 Ric(g(t))$
 projected from the space of metrics onto its quotient modulo diffeomorphisms and scalings, and often arise as blow-up limits for the Ricci flow on compact manifolds. Further,
Ricci solitons model the formation of singularities in the Ricci flow and they correspond to self-similar solutions  (cf. \cite{MT}).

We shall denote a Ricci soliton by $(M,g,\xi ,\lambda )$. We call the vector field $\xi $ the {\it potential field}. 
A Ricci soliton $(M,g,\xi ,\lambda )$ is called  {\it shrinking, steady} or {\it expanding} according to  $\lambda >0,\, \lambda =0,$ or $\lambda<0$, respectively. A {\it trivial Ricci soliton} is one for which $\xi$ is zero or
Killing, in which case the metric is Einsteinian. 

A Ricci soliton $(M,g,\xi ,\lambda )$ is called a {\it gradient Ricci soliton} if its potential field $\xi $ is the gradient of some smooth function $f$ on $M$. We shall denote a gradient Ricci soliton by $(M,g,f,\lambda )$ and call
the smooth function $f$ the {\it potential function}. A gradient Ricci soliton $(M,g,f,\lambda )$ is called {\it trivial} if its potential function $f$ is a constant. It follows from \e{1.1} that trivial gradient Ricci solitons are trivial Ricci solitons automatically since $\xi=\nabla f$. 
It is well-known that if $(M,g,\xi ,\lambda )$ is a compact Ricci soliton, then the potential field $\xi $ is a gradient of some smooth function $f$ up to the addition of a Killing field  (cf. \cite{P}) and thus every compact Ricci soliton is a gradient Ricci soliton. 

During the last two decades, the geometry of Ricci solitons has been the focus of attention of many
mathematicians. In particular, it has become more important after Grigory Perelman  applied Ricci solitons to solve the long standing Poincar\'e conjecture posed in 1904. G. Perelman observed in \cite{P} that  the Ricci solitons on compact simply connected Riemannian manifolds   are gradient Ricci solitons as solutions of Ricci flow. 

A vector field on a Riemannian manifold $M$ is called {\it concurrent} if it satisfies
\begin{align}\label{1.2} \nabla_X v=X,\;\; X\in TM.\end{align}
The best known example of Riemannian manifolds endowed with concurrent vector fields is the Euclidean space with the concurrent vector field given by its position vector field ${\bf x}$ (with respect to the origin).

For a submanifold $M^n$ of a Euclidean $m$-space $\mathbb E^m$, the most natural tangent vector field of $M^n$ is the tangential component of the position vector field $\x$ of $M^n$ in $\mathbb E^m$ (cf. for instance \cite{C02,C03}). Ricci solitons on Euclidean submanifolds arisen from such a vector field were studied recently by the authors in \cite{CD2}. Several fundamental results in this respect were proved in \cite{CD2}. 
We remark that Ricci solitons on submanifolds have also been studied  in \cite{CK1,CK2,CK3}  by  J. T. Cho and M. Kimura from a different viewpoint. They proved several interesting results on Ricci solitons on submanifolds; however their potential fields of the Ricci solitons are quite different from ours.

In this paper we completely classify Ricci solitons on Euclidean hypersurfaces whose potential field arisen from the position vector field.

\section{Basic definitions and formulas}

For general references on Riemannian submanifolds, we refer to \cite{book73,book11,book14}.

 Let  $(N^m,\tilde g)$ be an $m$-dimensional Riemannian manifold and $\phi:M^n \to N^m$ an isometric immersion of a Riemannian $n$-manifold $(M^n,g)$ into $(N^m,\tilde g)$.      
Denote by $\nabla$ and $\tilde\nabla$ the Levi-Civita connections on $(M^n,g)$ and $(N^m,\tilde g)$, respectively. 

For vector fields $X,Y$ tangent to $M^n$ and $\eta$ normal to $M^n$, the formula of Gauss and the formula of Weingarten are given respectively by \begin{align} &\label{2.1}\tilde \nabla_XY=\nabla_XY+h(X,Y), \;\;
\\& \label{2.2}\tilde \nabla_X \eta=-A_\eta X+D_X\eta,\end{align} 
where $\nabla_X Y$ and $h(X,Y)$ are the tangential and the normal components of $\tilde\nabla_X Y$. Similarly,  $-A_\eta X$  and  $D_X\eta$ are the tangential and normal components of  $\tilde \nabla_X \eta$. These two formulas define the second
fundamental form $h$, the shape operator $A$, and the normal connection $D$ of $M^n$ in the ambient space $N^m$. 
 
  It is well-known that each  $A_{\eta}$ is a self-adjoint endomorphism. The shape operator $A$ and the second fundamental form $h$ are related by
 \begin{align} &\label{2.3}\<h(X,Y),\eta\>=\<A_{\eta}X,Y\>,\end{align}
 where $\<\;\, ,\;\>$ denotes the inner product of $M^n$ as well as of $N^m$.
  
  The {\it mean curvature vector}  $H$ of $M^n$ in $N^m$ is defined by \begin{align}\label{2.4} H=\(\frac{1}{n}\){\rm trace}\, h.\end{align}
A submanifold $M^n$ is called {\it minimal} if its mean curvature vector field vanishes identically. It is called {\it totally umbilical} if the second fundamental form satisfies 
$ h(X,Y)=g(X,Y)H$ for tangent vectors $X,Y$.

The equations of Gauss and Codazzi are given respectively by
\begin{align} &\label{2.5} \<R(X,Y)Z,W\> = \<\!\right. \tilde R(X,Y)Z,W\! \left.\>  + \<h(X,W),h(Y,Z)\>\\&\notag \hskip1.6in - \<h(X,Z),h(Y,W)\>,\\
& \label{2.6} (\tilde R(X,Y)Z)^\perp =(\bar \nabla_X h)(Y,Z)-(\bar\nabla_Y h)(X,Z),\end{align}
for vectors $X,Y,Z,W$  tangent to $M$, where $(\tilde R(X,Y)Z)^\perp $ is the normal component of $\tilde R(X,Y)Z$ and $\bar\nabla h$ is defined by
\begin{equation}\begin{aligned}&\label{2.7} (\bar\nabla_X h)(Y,Z) = D_X h(Y,Z) - h(\nabla_X Y,Z) - h(Y,\nabla_X Z).\end{aligned}\end{equation} 

For a function $f$ on $M^n$, we denote by $\nabla f$ and $H^f$ the gradient of $f$ and the Hessian of $f$, respectively. Thus we have
\begin{align}\label{2.8} & g(\nabla f,X)=Xf,
\\&\label{2.9} H^f(X,Y)=XYf-(\nabla_X Y)f.\end{align}

Throughout this paper, $S^k(r)$ denote the $k$-dimensional sphere of radius $r$ and $\mathbb E^k$ the Euclidean $k$-space.

\section{Doubly warped and twisted products}

For a differential manifold $M$, we denote by $C^\infty(M)$ the space of differentiable functions on $M$, and by $TM$ the tangent bundle of $M$.

 Let $M_1$ and $M_2$ be pseudo-Riemannian manifolds with pseudo-Riemannian metrics $g_1$ and $g_2$, respectively. If $f_1$ and $f_2$ are positive functions in $C^\infty(M_1\times M_2)$ and $\pi_r:M\to M_r$ denotes the canonical projection for $r=1,2$. Then the {\it doubly twisted product} $M_1\times_{(f_1,f_2)} M_2$  of $(M_1,g_1)$ and $(M_2,g_2)$ is the manifold $M_1\times M_2$ equipped with the pseudo-Riemannian metric $g$ defined by \begin{equation}\label{3.1} g(X,Y)=f_1^2\cdot g_1(\pi_{1_*}X,\pi_{1_*}Y)+f_2^2\cdot g_2(\pi_{2_*}(X),\pi_{2_*}Y) \end{equation}
for tangent vectors $X,Y \in T(M_1\times M_2)$ (cf. \cite{book81,PR}).  In particular, if either $f_1=1$ or $f_2=1$, then the doubly twisted product is a {\it twisted product} (in the usual sense) (see \cite[page 66]{book81}).

\begin{definition} {\rm A doubly twisted product $M_1\times_{(f_1,f_2)} M_2$ is called a {\it doubly warped product} if $f_1\in C^\infty(M_2)$ and $f_2\in C^\infty(M_1)$.  In particular, if either $f_1=1$ or $f_2=1$, then the doubly warped product is a  {\it warped product} (in the usual sense).}\end{definition}

\begin{definition} {\rm  A doubly twisted product $M_1\times_{(f_1,f_2)} M_2$ is called a {\it warped-twisted product} if $f_1\in C^\infty(M_2)$ and $f_2\in C^\infty(M_1\times M_2)$. Similarly,  $M_1\times_{(f_1,f_2)} M_2$ is a {\it twisted-warped product} if $f_1\in C^\infty(M_1\times M_2)$ and $f_2\in C^\infty(M_1)$.  } \end{definition}

A {\it foliation} $\mathcal D$ on a manifold $M$ is an integrable distribution, i.e., $\mathcal D$ is a 
vector subbundle of the tangent bundle $TM$ such that, for any vector fields $X,Y$ in $\mathcal D$, the Lie bracket $[X,Y]$ takes values in $\mathcal D$ as well.
 A foliation $\mathcal D$ on a pseudo-Riemannian manifold $M$ is called  {\it totally umbilical}, if every leaf  of $\mathcal D$ is a totally umbilical submanifold of $M$. If, in addition, the mean curvature vector of every leaf is parallel in the normal bundle,  then $\mathcal D$ is called  a {\it  spherical foliation}. In this case, leaves of $\mathcal D$ are {\it extrinsic spheres} of $M$.
If leaves of a foliation $\mathcal D$ are totally geodesic submanifolds, $\mathcal D$ is called a {\it totally geodesic foliation}.
  
 The following result was proved in \cite{PR}.
  
  \begin{theorem}\label{T:3.1}  Let $g$ be a pseudo-Riemannian metric on  $M_1\times M_2$. If the canonical foliations $\mathcal D_1$ and $\mathcal D_2$ intersect perpendicularly everywhere, then $g$ is the metric of
 \begin{enumerate}
 \item[{\rm (a)}] a double-twisted product $M_1\times_{(f_1,f_2)} M_2$ if and only if $\mathcal D_1$ and $\mathcal D_2$ are totally umbilical foliations;
 
 \item[{\rm (b)}]  a twisted product $M_1\times_{f} M_2$ if and only if $\mathcal D_1$ is a totally geodesic and
$\mathcal D_2$ a totally umbilical foliation;
    
     \item[{\rm (c)}]  a warped product $M_1\times_f M_2$ if and only if  $\mathcal D_1$ is a totally geodesic and $\mathcal D_2$ a spherical foliation.
 \end{enumerate}
\end{theorem}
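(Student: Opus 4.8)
The plan is to reduce each geometric condition on the canonical foliations to a differential condition on the components of $g$ in adapted local coordinates, and then to integrate.

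Since $\mathcal{D}_1$ and $\mathcal{D}_2$ meet perpendicularly and together span $T(M_1\times M_2)$, we have the orthogonal splitting $TM=\mathcal{D}_1\oplus\mathcal{D}_2$. Choosing product coordinates $(x^i)$ on $M_1$ and $(y^\alpha)$ on $M_2$, so that $\mathcal{D}_1=\mathrm{span}\{\partial_{x^i}\}$ and $\mathcal{D}_2=\mathrm{span}\{\partial_{y^\alpha}\}$, perpendicularity forces the mixed components $g(\partial_{x^i},\partial_{y^\alpha})$ to vanish. Hence $g$ is block diagonal, $g=g_{ij}\,dx^i dx^j+g_{\alpha\beta}\,dy^\alpha dy^\beta$, where both blocks are \emph{a priori} functions of all the variables.

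First I would compute the second fundamental forms of the leaves. Because the mixed metric components vanish, the relevant Christoffel symbols collapse, and the $\mathcal{D}_2$-component of $\nabla_{\partial_{x^i}}\partial_{x^j}$ equals $-\tfrac{1}{2}g^{\gamma\delta}(\partial_{y^\delta}g_{ij})\,\partial_{y^\gamma}$; this is exactly the second fundamental form $h^{(1)}$ of a leaf of $\mathcal{D}_1$, and symmetrically $h^{(2)}$ is governed by $\partial_{x^k}g_{\alpha\beta}$. Consequently $\mathcal{D}_1$ is totally umbilical exactly when $\partial_{y^\delta}g_{ij}=\rho_\delta\,g_{ij}$ with $\rho_\delta$ independent of the indices $i,j$, and integrating in the $y$-variables then gives $g_{ij}(x,y)=f_1^2(x,y)\,(g_1)_{ij}(x)$ for a positive function $f_1$ and a metric $g_1$ on $M_1$; the analogous statement holds with the two factors interchanged. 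Read in both directions, these equivalences yield (a) at once: both foliations are totally umbilical if and only if both blocks have this conformal-to-a-factor form, that is, if and only if $g$ is a doubly twisted product. Specialising to $\rho_\delta\equiv 0$ gives $\partial_{y^\delta}g_{ij}=0$, i.e. $f_1\equiv 1$ and $g_{ij}=(g_1)_{ij}(x)$, which is precisely the condition that $\mathcal{D}_1$ be totally geodesic; combined with $\mathcal{D}_2$ totally umbilical this is a twisted product, proving (b).

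The remaining and most delicate point is (c). Starting from the twisted product of (b), the mean curvature vector of a leaf of $\mathcal{D}_2$ is $H^{(2)}=-\operatorname{grad}(\log f_2)$, and I would impose that it be parallel in the normal bundle, $D\,H^{(2)}=0$. Since $g_{ij}$ no longer depends on $y$, the normal connection on $\mathcal{D}_1$ reduces to differentiating the components of $H^{(2)}$ in the $y$-directions, and parallelism becomes the vanishing of the mixed derivatives $\partial_{x^l}\partial_{y^\alpha}\log f_2$. Hence $\log f_2$ splits as $a(x)+b(y)$, so that $f_2=e^{a(x)}e^{b(y)}$; absorbing the factor $e^{2b(y)}$ into a redefined metric $g_2$ on $M_2$ turns the twisting function into a function of $x$ alone, which is exactly a warped product, proving (c). I expect the main obstacle to lie here: verifying that normal-parallelism of $H^{(2)}$ cleanly produces the splitting $\log f_2=a(x)+b(y)$, and then checking that the reabsorption genuinely yields a warped, rather than merely twisted, product. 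One must also ensure that the locally integrated data $f_1,f_2,g_1,g_2$ are consistent across charts and assemble into the global objects appearing in the definitions.
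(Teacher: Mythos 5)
Your proposal is correct, but note first that the paper itself gives no proof of Theorem~\ref{T:3.1}: it is quoted from Ponge--Reckziegel \cite{PR}. The only proof in the paper it can be compared with is that of the companion Theorem~\ref{T:3.2}, and there your argument and the paper's coincide in substance: your step (c) is precisely the invariant computation of that proof, done in adapted coordinates. Where the paper writes $H_2=-(E_1\ln f_2)E_1$ (equation \eqref{3.9}) and deduces from normal parallelism \eqref{3.11} that $X(Z\ln f_2)=0$, hence $f_2\in C^\infty(M_1)$, you write $H^{(2)}=-\operatorname{grad}(\log f_2)$ and deduce $\partial_{x^l}\partial_{y^\alpha}\log f_2=0$, hence $\log f_2=a(x)+b(y)$; your extra reabsorption of $e^{2b(y)}$ into $g_2$ is what upgrades ``$f_2$ depends only on $x$ up to a separated factor'' to a genuine warped product, and it is legitimate. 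The coordinate route buys you an elementary verification that the normal connection of $\mathcal D_1$ along $\mathcal D_2$-directions reduces to $\partial_{y^\alpha}$ of components (using $\partial_{y^\alpha}g_{ij}=0$, which you correctly flag as the point where total geodesy of $\mathcal D_1$ enters), at the cost of chart-dependence, which the invariant frame computation of \cite{PR} and of Theorem~\ref{T:3.2} avoids.

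Two steps you leave implicit should be recorded to make the argument airtight. First, before ``integrating in the $y$-variables'' in part (a) you need the $y$-one-form $\rho_\delta\,dy^\delta$ to be closed for each fixed $x$, so that $\rho_\delta=\partial_{y^\delta}\psi$ and $f_1^2=e^{\psi}$; this is immediate from symmetry of second partials, since $\partial_{y^\epsilon}\partial_{y^\delta}g_{ij}=\partial_{y^\delta}\partial_{y^\epsilon}g_{ij}$ gives $\bigl(\partial_{y^\epsilon}\rho_\delta-\partial_{y^\delta}\rho_\epsilon\bigr)g_{ij}=0$ and the block $(g_{ij})$ is nondegenerate, but it must be said. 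Second, the globalization you defer is also routine and worth one line: fix the slice $M_1\times\{y_0\}$, set $(g_1)_{ij}(x)=g_{ij}(x,y_0)$, and define $f_1^2(x,y)$ as the factor $\exp\bigl(\int_\gamma\rho\bigr)$ along any path $\gamma$ from $y_0$ to $y$; path-independence is automatic because the value $g_{ij}(x,y)$ itself does not depend on $\gamma$. With these two remarks inserted, your proof is complete and is essentially the standard (and the paper's own, in the Theorem~\ref{T:3.2} variant) argument.
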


For the proof of our main result, we need the following.

 \begin{theorem}\label{T:3.2}  Let $g$ be a pseudo-Riemannian metric on  $M_1\times M_2$. If the canonical foliations $\mathcal D_1$ and $\mathcal D_2$ intersect perpendicularly everywhere, then we have:
\begin{enumerate}
 \item[{\rm (1)}] If $\mathcal D_1$ is a totally umbilical foliation and $\mathcal D_2$ a spherical  foliation, then  the metric $g$ on $M_1\times M_2$ is a twisted-warped product;
 
  \item[{\rm (2)}] If $\mathcal D_1$ is a spherical foliation and $\mathcal D_2$ a totally umbilical foliation, then  $g$  is a warped-twisted product;
    
 \item[{\rm (3)}] If $\mathcal D_1$ and $\mathcal D_2$ are spherical foliations, then  $g$ is a doubly warped product.
\end{enumerate} \end{theorem}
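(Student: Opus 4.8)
The plan is to reduce everything to Theorem~\ref{T:3.1} and then to use the sphericity hypotheses to sharpen the dependence of the warping functions. Since a spherical foliation is in particular totally umbilical, in each of the cases (1)--(3) both canonical foliations $\mathcal D_1$ and $\mathcal D_2$ are totally umbilical. Hence Theorem~\ref{T:3.1}(a) already gives that $g$ is the metric of a doubly twisted product $M_1\times_{(f_1,f_2)}M_2$, i.e. $g=f_1^2 g_1+f_2^2 g_2$ with positive $f_1,f_2\in C^\infty(M_1\times M_2)$. It then suffices to prove the two implications: \emph{$\mathcal D_2$ spherical} $\Rightarrow f_2\in C^\infty(M_1)$, and \emph{$\mathcal D_1$ spherical} $\Rightarrow f_1\in C^\infty(M_2)$. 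Granting these, cases (1), (2) and (3) yield respectively a twisted-warped, a warped-twisted and a doubly warped product, which is exactly the assertion.

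First I would record the extrinsic geometry of the leaves inside this doubly twisted product. Let $X,Y$ be tangent to $\mathcal D_1$ and $U,V$ tangent to $\mathcal D_2$. A direct computation with the Koszul formula for the Levi-Civita connection of $g=f_1^2 g_1+f_2^2 g_2$, organized via the Gauss formula \e{2.1}, shows that each leaf of $\mathcal D_2$ is totally umbilical with
\begin{align}\label{e:Hsplit}
h_2(U,V)=-g(U,V)\,(\nabla \log f_2)^{\mathcal D_1},\qquad H_2=-(\nabla \log f_2)^{\mathcal D_1},
\end{align}
where $(\,\cdot\,)^{\mathcal D_1}$ denotes the component in the normal distribution $\mathcal D_1$; symmetrically $h_1(X,Y)=-g(X,Y)(\nabla\log f_1)^{\mathcal D_2}$ and $H_1=-(\nabla\log f_1)^{\mathcal D_2}$. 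As a check, this recovers the extreme case of Theorem~\ref{T:3.1}: $\mathcal D_i$ is totally geodesic precisely when the relevant warping function is constant in the normal directions.

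Next I would translate the sphericity of $\mathcal D_2$, namely $D_UH_2=0$ for every $U$ tangent to a leaf, into an equation for $\log f_2$. Differentiating \e{e:Hsplit} and computing the normal derivative $D_UH_2=(\nabla_U H_2)^{\mathcal D_1}$ by means of the Weingarten formula \e{2.2} and the umbilicity of $\mathcal D_2$, one finds that the $\mathcal D_1$-component of $\nabla\log f_2$ is parallel along the leaves. Separating the two sets of variables, this should force $\log f_2$ to differ from a function on $M_1$ by a function on $M_2$; absorbing the latter into the fiber metric $g_2$ (the only freedom left in the factor $f_2^2 g_2$) then produces a representation with $f_2\in C^\infty(M_1)$. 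The same argument applied to $\mathcal D_1$ gives $f_1\in C^\infty(M_2)$ when $\mathcal D_1$ is spherical, and assembling the two conclusions in the three cases completes the proof.

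The step I expect to be the crux is this last separation of variables: passing from ``the normal part of $\nabla\log f_2$ is parallel in the normal connection along each leaf'' to ``$\log f_2$ splits as a sum of a function on $M_1$ and a function on $M_2$.'' This is where the spherical --- rather than merely umbilical --- hypothesis must be used in an essential way, presumably in tandem with the Codazzi equation \e{2.6} for the leaf so as to control the mixed second derivatives of $\log f_2$, and it is also the point at which the gauge freedom in the doubly twisted representation has to be managed carefully so that the warping function can genuinely be chosen independent of the second factor.
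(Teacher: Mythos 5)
Your proposal is correct and follows essentially the same route as the paper: reduce to Theorem~\ref{T:3.1}(a) to get the doubly twisted form, compute $H_2=-(\nabla\log f_2)^{\mathcal D_1}$ (the paper's \e{3.6}--\e{3.9}), and show that parallelism of $H_2$ in the normal bundle kills the mixed derivatives $X(Z\ln f_2)$, so that $\ln f_2$ separates and the $M_2$-dependent factor is absorbed into $g_2$. The only cosmetic differences are that the paper obtains $X(Z\ln f_2)=Z(X\ln f_2)=0$ directly from the normal-connection computation \e{3.10}--\e{3.11}, with no need for the Codazzi equation you tentatively invoke, and that it leaves implicit the gauge-absorption step you correctly identify as necessary.
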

\begin{proof}  Let $g$ be a pseudo-Riemannian metric on  $M_1\times M_2$ such that their canonical foliations $\mathcal D_1$ and $\mathcal D_2$ intersect perpendicularly everywhere. If $\mathcal D_1$ and $\mathcal D_2$ are totally umbilical foliations,  Theorem \ref{T:3.1}(1) implies that  $g$ is a doubly twisted product metric.  So the metric $g$ can be expressed as \e{3.1}. Hence we have
\begin{align}\label{3.2} g(X,Z)=0,\;\; X\in {\mathfrak X}(M_1),\;\; Z\in {\mathfrak X}(M_2),\end{align}
where ${\mathfrak X}(N)$ consists of vector fields of a manifold $N$. 

Since $[X,Z]=0$ for $X\in {\mathfrak X}(M_1)$ and $Z\in {\mathfrak X}(M_2)$, we get
\begin{align}\label{3.3} \nabla_XZ=\nabla_Z X.\end{align}
Therefore, for $X\in {\mathfrak X}(M_1)$ and $Z,W\in \mathfrak X(M_2)$,  we obtain
\begin{align}\label{3.4} Xg(Z,W)=X(f_2^2\cdot g_2(Z,W))=\frac{2Xf_2}{f_2}g(Z,W).\end{align}

On the other hand, by using \e{3.3} and $g([Z,W],X)=0$ we find
\begin{equation}\begin{aligned}\label{3.5} Xg(Z,W)&=g(\nabla_ZX,W)+g(Z,\nabla_W X)
\\& =-g(X,\nabla_Z W)-g(\nabla_W Z,X)\\& =-2g(X,\nabla_Z W).\end{aligned}\end{equation}
Hence it follows from \e{3.4} and \e{3.5} that the second fundamental form $h_2$ of $M_2$ in $M_1\times_{(f_1,f_2)} M_2$ is given by
\begin{align}\label{3.6} h_2(Z,W)=-\frac{\nabla^1f_2}{f_2}g(Z,W),\end{align}
where $\nabla^1 f_2$ is defined by
\begin{align}\label{3.7} \nabla^1 f_2=\sum_{i=1}^p (E_i f_2)E_i,\;\; p=\dim M_1,\end{align}
and $E_1,\ldots,E_p$ is an orthonormal basis of $TM_1$. In particular, if we choose $E_1$ in the direction of $\nabla^1 f_2$, then \e{3.7} reduces to 
\begin{align}\label{3.8} \nabla^1 f_2=(E_1 f_2)E_1.\end{align}
 Hence it follows from \e{3.6} and \e{3.8} that the mean curvature vector $H_2$ of $M_2$ in  $M_1\times_{(f_1,f_2)} M_2$ is given by
\begin{align}\label{3.9} H_2=- (E_1 \ln f_2)E_1.\end{align}
Thus we have
\begin{align}\label{3.10} \nabla_Z H_2=-Z(E_1 \ln f_2)E_1 - (E_1\ln f_2)\nabla_Z E_1,\;\; \forall Z\in TM_2.\end{align}
Therefore the normal connection $D^2$ of $M_2$ in $M_1\times_{(f_1,f_2)} M_2$ satisfies
\begin{align}\label{3.11} D^2_Z H_2=-Z(E_1\ln f_2)E_1-(E_1\ln f_2)D^2_Z E_1.\end{align}
Hence if  $H_2$ is parallel in the normal bundle of $M_2$ in $M_1\times_{(f_1,f_2)} M_2$, then we obtain
$X (Z\ln f_2)=Z(X\ln f_2)=0$ for $ X\in \mathfrak X(M_1)$ and $Z\in \mathfrak X(M_2).$ Consequently,  if  $H_2$ is parallel in the normal bundle, then  $f_2\in C^\infty (M_1)$. Therefore the doubly twisted product $M_1\times_{(f_1,f_2)} M_2$ is a twisted-warped product. This gives statement (1). 

Statements (2) and (3) can be proved in the same way as statement (1).
\end{proof}

\section{Some preliminary results on Ricci solitons}

We make the following

\vskip.1in
\noindent{\bf Assumption.} {\it $(N^m,\tilde g)$ is a Riemannian $m$-manifold endowed with a concurrent vector field $v$. For a submanifold $M^n$ of  $N^m$,  $v^T$ and $v^\perp$ denote the tangential and normal components of $v$ on $M^n$, respectively. } 
\vskip.1in

The following two results were proved in \cite{CD2}.

\begin{theorem}\label{T:4.1} A submanifold $M^n$ in $N^m$ admits a Ricci soliton $(M^n,g,v^T,\lambda)$ if and only if the Ricci tensor of $(M^n,g)$ satisfies
\begin{align} \label{4.1}Ric(X,Y)=(\lambda-1)g(X,Y)-\<h(X,Y),v^\perp\>\end{align}
for any $X,Y$ tangent to $M^n$.
\end{theorem}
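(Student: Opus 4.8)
The plan is to unwind the defining equation \e{1.1} of a Ricci soliton directly for the potential field $\xi=v^T$. For any vector field $\xi$ on $M^n$ one has $(\mathcal L_\xi g)(X,Y)=g(\nabla_X\xi,Y)+g(\nabla_Y\xi,X)$, so the whole argument reduces to expressing the covariant derivative $\nabla_X v^T$ in terms of the second fundamental form and then reading off \e{1.1} entry by entry.

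First I would split $v=v^T+v^\perp$ along $M^n$ and invoke the concurrency hypothesis $\tilde\nabla_X v=X$. Differentiating the two summands by the Gauss formula \e{2.1} applied to the tangential field $v^T$ and the Weingarten formula \e{2.2} applied to the normal field $v^\perp$ gives
\[
X=\(\nabla_X v^T-A_{v^\perp}X\)+\(h(X,v^T)+D_X v^\perp\).
\]
Comparing the tangential parts of both sides (recall $X$ is itself tangent to $M^n$) yields the key identity $\nabla_X v^T=X+A_{v^\perp}X$.

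Next I would substitute this into the Lie-derivative formula. Using that $A_{v^\perp}$ is self-adjoint together with relation \e{2.3} between $h$ and $A$, a brief computation gives $(\mathcal L_{v^T}g)(X,Y)=2g(X,Y)+2\<h(X,Y),v^\perp\>$. Inserting this into \e{1.1}, the factor $\frac12$ reduces the first term to $g(X,Y)+\<h(X,Y),v^\perp\>$, and solving for $Ric$ produces precisely \e{4.1}.

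Finally, the equivalence is automatic, since every step above is an identity rather than a one-directional implication: equation \e{1.1} for $\xi=v^T$ holds if and only if \e{4.1} holds. I do not expect a genuine obstacle here; the only point requiring care is the tangential/normal bookkeeping when splitting $\tilde\nabla_X v$, because that separation is exactly what converts the abstract concurrency condition into the explicit formula for $\nabla_X v^T$ that drives the rest of the computation.
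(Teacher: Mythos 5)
Your proposal is correct and coincides with the argument the authors rely on: the paper does not reprove Theorem \ref{T:4.1} but cites \cite{CD2}, where the proof runs exactly along your lines --- splitting $\tilde\nabla_X v = X$ via the Gauss and Weingarten formulas \eqref{2.1}--\eqref{2.2} to obtain $\nabla_X v^T = X + A_{v^\perp}X$, computing $({\mathcal L}_{v^T}g)(X,Y) = 2g(X,Y) + 2\<h(X,Y),v^\perp\>$ using the self-adjointness of $A_{v^\perp}$ and \eqref{2.3}, and substituting into \eqref{1.1}. Since every step is an unconditional identity rather than a one-way implication, the stated equivalence follows at once, as you observe.
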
 

\begin{proposition}\label{P:4.1} If $(M^n,g,\x^T,\lambda)$ is a Ricci soliton on a hypersurface of $M^n$ of $\mathbb E^{n+1}$, then $M^n$ has at most two distinct principal curvatures given by
\begin{align}\label{4.3}\k_1,\k_2=\frac{n\alpha+\rho \pm \sqrt{(n\alpha+\rho)^2+4-4\lambda}}{2},\end{align}
where $\alpha$ is the mean curvature and $\rho$ is the support function of $M^n$, i.e., $\rho=\<\x,N\>$ and $H=\alpha N$ with $N$ being a unit normal vector field.\end{proposition}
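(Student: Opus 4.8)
The plan is to translate the soliton condition of Theorem \ref{T:4.1} into an algebraic equation for the shape operator, and then exploit the fact that a self-adjoint operator annihilated by a quadratic polynomial can have at most two distinct eigenvalues. First I would specialize Theorem \ref{T:4.1} to the present situation, where the concurrent field is $v=\x$ and, on a hypersurface, $v^\perp=\rho N$ with $\rho=\<\x,N\>$. Writing $A:=A_N$ for the shape operator and using \e{2.3} to rewrite $\<h(X,Y),v^\perp\>=\rho\<AX,Y\>$, the soliton equation \e{4.1} becomes
\begin{align*} Ric(X,Y)=(\lambda-1)g(X,Y)-\rho\<AX,Y\>.\end{align*}

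Next I would compute the Ricci tensor intrinsically from the Gauss equation. Since the ambient space $\mathbb E^{n+1}$ is flat, the term $\<\tilde R(X,Y)Z,W\>$ in \e{2.5} vanishes, and on a hypersurface $h(X,Y)=\<AX,Y\>N$. Contracting \e{2.5} over an orthonormal frame $e_1,\ldots,e_n$ that diagonalizes $A$, and using ${\rm trace}\,A=n\a$ together with the self-adjointness of $A$, I expect to obtain
\begin{align*} Ric(Y,Z)=n\a\<AY,Z\>-\<A^2Y,Z\>.\end{align*}

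Equating the two expressions for $Ric$ and using that the identity holds for all tangent $Y,Z$ gives the operator equation
\begin{align*} A^2-(n\a+\rho)A+(\lambda-1)I=0\end{align*}
at every point of $M^n$. The final step is to read off the spectrum: because $A$ is self-adjoint it is diagonalizable with real eigenvalues, namely the principal curvatures, and each such eigenvalue $\k$ must satisfy the scalar quadratic $\k^2-(n\a+\rho)\k+(\lambda-1)=0$. A quadratic has at most two roots, so $M^n$ has at most two distinct principal curvatures, and solving by the quadratic formula produces exactly \e{4.3}.

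I do not anticipate a genuine obstacle here; the argument is essentially a direct computation, the only subtlety being the Gauss-equation contraction that identifies $Ric$ with $n\a A-A^2$. The one point demanding care is that $\a$ and $\rho$ are functions on $M^n$ rather than constants, so the quadratic identity holds pointwise with point-dependent coefficients; accordingly the conclusion and the formula \e{4.3} are to be read pointwise.
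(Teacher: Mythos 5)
Your argument is correct and is essentially the intended one: the paper states Proposition \ref{P:4.1} without proof, citing \cite{CD2}, and the proof there is exactly your computation --- substitute $v^\perp=\rho N$ into \eqref{4.1}, use the contracted Gauss equation $Ric(Y,Z)=n\a\<AY,Z\>-\<A^2Y,Z\>$ for a hypersurface of the flat ambient $\mathbb E^{n+1}$ to get the pointwise operator identity $A^2-(n\a+\rho)A+(\lambda-1)I=0$, and then use self-adjointness of $A$ to conclude that every principal curvature is a root of the quadratic $\k^2-(n\a+\rho)\k+(\lambda-1)=0$, whose discriminant $(n\a+\rho)^2+4-4\lambda$ yields the stated formula. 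Your closing caveat that $\a$ and $\rho$ are functions, so the two roots are point-dependent, is the correct reading and matches how the formula is used later, e.g.\ in \eqref{5.1} and \eqref{6.1}.
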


The following theorem was proved in \cite{CD2}.

\begin{theorem} \label{T:4.2} Let $(M^n,g,\x^T,\lambda)$ be a shrinking Ricci soliton on a hypersurface of $M^n$ of $\mathbb E^{n+1}$ with $\lambda=1$. Then $M^n$ is an open portion of one of the following hypersurfaces of $\mathbb E^{n+1}$:
\begin{enumerate}
\item[{\rm (1)}] A hyperplane through the origin $o$.
\item[{\rm (2)}] A hypersphere centered at the origin.
\item[{\rm (3)}] A flat hypersurface generated by lines through the origin of $\mathbb E^{n+1}$.
\item[{\rm (4)}] A spherical hypercylinder $S^k(\sqrt{k-1})\times \mathbb E^{n-k}$, $2\leq k\leq n-1$.\end{enumerate}\end{theorem}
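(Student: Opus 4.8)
The plan is to read the shape operator off the soliton equation and then organize the whole classification by the multiplicity of the single nonzero principal curvature. Setting $\lambda=1$ in Proposition \ref{P:4.1} collapses the discriminant in \eqref{4.3}, since $\sqrt{(n\alpha+\rho)^2+4-4\lambda}=|n\alpha+\rho|$; hence at every point the principal curvatures of $M^n$ lie in $\{0,\mu\}$ with $\mu:=n\alpha+\rho$. So $M^n$ has at most two distinct principal curvatures and one of them is always $0$. Writing $\mathcal D_0$ and $\mathcal D_\mu$ for the corresponding eigendistributions and letting $k$ be the multiplicity of $\mu$, a trace computation gives $n\alpha=k\mu$ and therefore $\rho=(1-k)\mu$. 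I would note that $k$ is locally constant on the open set where $A\neq 0$, treat each value of $k$ separately, and patch by connectedness at the end.

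The second ingredient is the support function. Differentiating $\rho=\<\x,N\>$ and using the Weingarten formula \eqref{2.2} yields $\nabla\rho=-A\x^{T}$, so $\nabla\rho\in\mathcal D_\mu$; since $\rho=(1-k)\mu$ this forces $\mu$ to be constant along the leaves of $\mathcal D_0$. This already settles the three extreme cases. If $A\equiv0$ then $M^n$ is totally geodesic, i.e.\ an open part of a hyperplane [case (1)]. If $k=n$ then $M^n$ is totally umbilical, hence an open part of a round hypersphere, and constancy of $\rho=(1-n)\mu$ rules out any center other than the origin [case (2)]. If $k=1$ then $\rho=0$, so $\x$ is everywhere tangent; its integral curves are lines through the origin contained in $M^n$, and since $A$ then has rank one the Gauss equation \eqref{2.5} makes $M^n$ flat, giving a flat hypersurface generated by lines through the origin [case (3)].

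The work concentrates in the range $2\le k\le n-1$. Here $\mathcal D_\mu$ has multiplicity $\geq 2$, so the Codazzi equation \eqref{2.6} forces $\mu$ to be constant along $\mathcal D_\mu$ too; combined with the previous paragraph, $\mu$ (hence $\rho$ and $\alpha$) is constant on $M^n$. I would then run the standard Cartan-type computation from Codazzi to show that both eigendistributions are integrable, and, using $\nabla\mu=0$, that their leaves are totally geodesic in $M^n$: the $\mathcal D_\mu$-leaves being round $k$-spheres of radius $1/|\mu|$ in $\mathbb E^{n+1}$ and the $\mathcal D_0$-leaves open pieces of $(n-k)$-planes. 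With both canonical foliations totally geodesic, Theorem \ref{T:3.1} identifies $M^n$ as a Riemannian product, necessarily $S^k(r)\times\mathbb E^{n-k}$; the relation $\rho=(1-k)\mu$ together with the direct values $\rho=r$ and $|\mu|=1/r$ for this cylinder then gives $r^{2}=k-1$, i.e.\ $S^k(\sqrt{k-1})\times\mathbb E^{n-k}$ [case (4)].

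The hard part will be the leaf analysis in this last case, specifically an eigendistribution of multiplicity one (namely $\mathcal D_0$ when $k=n-1$, as well as the whole of case (3)), where the ``multiplicity $\ge 2$'' Codazzi argument is unavailable; there I would compensate with the position-field data $\nabla\rho=-A\x^{T}$ and $\rho=(1-k)\mu$ to supply the missing geometric information and to pin down constancy of $\mu$. A secondary obstacle is verifying that $k$ is globally constant, so that the hyperplane, hypersphere, cone, and cylinder alternatives cannot coexist on a connected $M^n$; I would resolve this by a continuity argument combined with the rigidity of each model.
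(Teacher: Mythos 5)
There is nothing in this paper to compare you against line by line: Theorem \ref{T:4.2} is imported from \cite{CD2} and the present paper never proves it, so your reconstruction must stand on its own. It does. The opening move is exactly right: with $\lambda=1$ the discriminant in Proposition \ref{P:4.1} collapses, so the principal curvatures at each point lie in $\{0,\mu\}$ with $\mu=n\alpha+\rho$; the trace identity $n\alpha=k\mu$ giving $\rho=(1-k)\mu$, and the support-function identity $\nabla\rho=-A\x^{T}\in\mathcal D_\mu$, are both correct, and the four ranges of the rank $k$ ($A\equiv 0$; $k=n$; $k=1$; $2\le k\le n-1$) exhaust the possibilities, landing respectively on the hyperplane, the hypersphere centered at $o$ (constancy of the umbilical factor, hence of $\rho$, forces $\x^{T}=0$), the flat cone (here $\rho=0$, $h(\x,\cdot)=0$, and rank-one $A$ kills the full curvature tensor by Gauss, not just sectional curvatures -- worth saying explicitly), and the cylinder, where your radius computation $\rho=r$, $\mu=-1/r$, $\rho=(1-k)\mu$ indeed yields $r^2=k-1$. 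One substantive remark: the obstacle you flag at the end is not actually there. For $\mathcal D_0$ the relevant eigenvalue is $0$, which is constant for free, and the standard Codazzi computation shows the nullity distribution of $A$ (with $\mu\ne 0$) is integrable with leaves totally geodesic in $M^n$ \emph{regardless of its rank} -- this is the usual relative-nullity argument, so no multiplicity-two hypothesis and no extra input from $\nabla\rho$ is needed there. Better still, in the case $2\le k\le n-1$, once $X\mu=0$ for $X\in\mathcal D_0$ (your $\nabla\rho$ identity) and $e_i\mu=0$ along $\mathcal D_\mu$ (the multiplicity $\ge 2$ Codazzi step), the same mixed Codazzi computation makes $\mathcal D_\mu$ totally geodesic in $M^n$ as well, so you can skip sphericity and Theorem \ref{T:3.2} entirely and invoke the local de Rham splitting (or Theorem \ref{T:3.1}(c) with constant warping) plus Moore's lemma to split the immersion as $S^k(1/|\mu|)\times\mathbb E^{n-k}$; this is cleaner than the doubly-warped machinery the paper needs for its harder $\lambda\ne 1$ case in Theorem \ref{T:6.1}.

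Two small caveats. In your case (1) you prove only ``an open part of a hyperplane'' and silently match it to the listed ``hyperplane through the origin''; note that \eqref{4.1} with $h=0$ and $Ric=0$ holds for \emph{every} hyperplane when $\lambda=1$, so the origin condition is not derivable -- that is a blemish in the quoted statement, inherited from \cite{CD2}, and you should flag it rather than elide it. As for the global constancy of $k$ on a connected $M^n$: your locally-constant-rank observation is correct (the eigenvalues can only take the values $0$ and the continuous nonzero $\mu$), the boundary-patching is genuinely routine only up to the overlap of cases (1) and (3), and the paper's own proofs of Theorems \ref{T:5.1} and \ref{T:6.1} gloss this point in exactly the same way, so your continuity remark meets the source's standard of rigor.
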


We also need the following lemma.

\begin{lemma}\label{L:4.1} Let $M^n$ be a rotational hypersurface of $\mathbb E^{n+1}$ given by
\begin{equation}\begin{aligned} \label{4.3} & \x(x_1,\ldots,x_n)=\Big(x_1,f(x_1)\sin x_2, f(x_1)\cos x_2\sin x_3,\ldots, \\&\hskip.3in f(x_1)\cos x_2\cdots \cos x_{n-1}\sin x_n,f(x_1)\cos x_2\cdots\cos x_n\Big).\end{aligned}\end{equation}
If $(M^n,g,\x^T,\lambda)$ is a Ricci soliton, then $M^n$ is an open portion of a hypersphere.\end{lemma}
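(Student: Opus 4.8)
My plan is to turn the soliton condition into ordinary differential equations for the profile function $f$ and integrate them. First I would record the extrinsic data of the given rotational hypersurface. Writing the profile curve as $(x_1,f(x_1))$ rotated about the $x_1$-axis and choosing the unit normal, a direct computation gives the support function $\rho=(f-x_1f')/\sqrt{1+f'^2}$ together with the two principal curvatures: the meridian curvature $\kappa_1=f''(1+f'^2)^{-3/2}$, which is simple, and the rotational curvature $\kappa_2=-1/(f\sqrt{1+f'^2})$, of multiplicity $n-1$. In particular $n\alpha=\kappa_1+(n-1)\kappa_2$, and $\kappa_2$ never vanishes since $f>0$.

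By Proposition \ref{P:4.1} every principal curvature is a root of $t^2-(n\alpha+\rho)t+(\lambda-1)=0$, so I would split according to whether $M^n$ is umbilical. If $\kappa_1=\kappa_2$ identically, then $M^n$ is totally umbilical in $\mathbb E^{n+1}$, and because $\kappa_2$ never vanishes it is not a piece of a hyperplane; hence, by the classical classification of umbilical hypersurfaces, it is an open portion of a hypersphere, which is exactly the desired conclusion. For consistency one checks that Theorem \ref{T:4.1}, applied to this umbilical (Einstein) metric with constant $\kappa_2$, forces $\rho$ to be constant, so that the centre lies at the origin.

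The substantive case is $\kappa_1\neq\kappa_2$, in which the two curvatures are the two distinct roots, so Vieta gives $\kappa_1\kappa_2=\lambda-1$ and $\kappa_1+\kappa_2=n\alpha+\rho$. Substituting $n\alpha=\kappa_1+(n-1)\kappa_2$, the sum relation collapses to $\rho=-(n-2)\kappa_2$, i.e. the first order equation $f^2-x_1ff'=n-2$, whose general solution (solving the linear equation for $u=f^2$) is $f^2=Cx_1^2+(n-2)$. Feeding this back yields $\kappa_1\kappa_2=-C(n-2)/(C(1+C)x_1^2+(n-2))^2$, and the product relation $\kappa_1\kappa_2=\lambda-1$ forces this to be constant, hence $C(1+C)=0$. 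The value $C=-1$ returns the origin-centred sphere of the umbilical branch, whereas $C=0$ gives the spherical hypercylinder $S^{n-1}(\sqrt{n-2})\times\mathbb E^{1}$, for which $\lambda=1$. Thus, apart from this cylinder, which occurs only when $\lambda=1$ and is already listed in Theorem \ref{T:4.2}, the hypersurface is an open portion of a hypersphere.

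I expect the main obstacle to be organisational rather than a single hard estimate. The key subtlety is that Proposition \ref{P:4.1} only guarantees the principal curvatures lie \emph{among} the two roots and need not realise both: this is precisely why the umbilical sphere survives the product relation and must be handled as its own branch, and why the non-umbilical branch collapses to the cylinder exactly at $\lambda=1$. Beyond that, one must be careful with the orientation of the normal in computing $\kappa_1,\kappa_2,\rho$, and with the integration of $f^2-x_1ff'=n-2$, after which the conclusion follows.
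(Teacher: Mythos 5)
Your argument is correct and, although it ends in the same two ODE branches as the paper, it reaches them by a genuinely different mechanism. The paper computes the Ricci tensor and $\<h,\x^\perp\>$ directly in the coordinates of the rotational parametrization (formulas \eqref{4.5}--\eqref{4.6}) and equates the normalized soliton expression of Theorem \ref{T:4.1} in the $x_1$- and $x_2$-directions; this equation factors as $(\kappa_1-\kappa_2)\bigl[(n-2)\kappa_2+\rho\bigr]=0$, so the paper's Case (ii), $1+f'{}^2+ff''=0$, is exactly your umbilical branch, while its Case (i), $1-f^2+x_1ff'=0$, is your Vieta relation $\rho=-(n-2)\kappa_2$ specialized to $n=3$: the printed formulas carry $n=3$ constants (the correct value is $Ric(\p_{x_1},\p_{x_1})=-(n-1)f''/\bigl(f(1+f'{}^2)\bigr)$ rather than $-2f''/\bigl(f(1+f'{}^2)\bigr)$), so your dimension-general bookkeeping $f^2-x_1ff'=n-2$ is actually cleaner than the source. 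Your point that Proposition \ref{P:4.1} makes each principal curvature a root of $t^2-(n\alpha+\rho)t+(\lambda-1)=0$ but that Vieta may be applied only when both roots are realized is precisely what legitimizes the branching, and your integrations and the constancy argument forcing $C(1+C)=0$ check out.

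Where you genuinely part company with the paper is the $C=0$ solution, and there you are right and the stated lemma is not: $f\equiv\sqrt{n-2}$ produces the spherical hypercylinder $S^{n-1}(\sqrt{n-2}\,)\times\mathbb E^1$, which is of the rotational form in the lemma and is a shrinking Ricci soliton with $\lambda=1$ by Example \ref{E:5.1} (take $k=n-1\ge 2$), cf. Theorem \ref{T:4.2}(4); it is therefore an actual counterexample to the lemma as literally stated. The paper's proof discards it silently by writing the Case (i) solutions as $f=\pm\sqrt{1+b^2x_1^2}$ with $b\ne 0$, which omits the constant solution of that ODE (the cylinder, of radius $1=\sqrt{n-2}$ in the paper's $n=3$ normalization). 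The slip is harmless for the main result, since Lemma \ref{L:4.1} is invoked only in Case (2.2.2.a.i) of Theorem \ref{T:6.1}, where $\lambda\ne 1$ and $\kappa_1,\kappa_2\ne 0$ rule the cylinder out; but as a freestanding statement your amended conclusion (hypersphere, or this one cylinder occurring only at $\lambda=1$) is the correct one. The only informality you share with the paper is treating the pointwise alternative (umbilic versus not) as holding globally; strictly one should argue on the open set where $\kappa_1\ne\kappa_2$ and patch by continuity, which is routine here.
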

\begin{proof} It is easy to verify from \e{4.3} that the metric tensor of $M^n$ is given by
\begin{align}\label{4.4} g=(1+f'(x_1)^2)dx_1^2+f^2(x_1)\Bigg\{dx_2^2+\cos^2x_2 dx_3^2+\cdots +
\prod_{j=2}^{n-1}\cos^2 x_j dx_n^2\Bigg\}.\end{align}
A direct computation shows that the Ricci tensor and the second fundamental form of $M^n$ satisfy
\begin{align}\label{4.5} &Ric(\p_{x_1},\p_{x_1})=\frac{-2f''}{f({1+f'{}^2})},\;
\;\;  Ric(\p_{x_2},\p_{x_2})=\frac{1+f'{}^2-f f''}{(1+f'{}^2)^2}
\\& \label{4.6} \<h(\p_{x_1},\p_{x_1}),\x^\perp\>=\frac{(f-x_1f')f''}{1+f'{}^2},\;
\;\;   \<h(\p_{x_2},\p_{x_2}),\x^\perp\>=\frac{(x_1 f'-f)f}{1+f'{}^2}        
.\end{align} where $\x^\perp$ is the normal component of the position vector field $\x$ of $M^n$ in $\mathbb E^{n+1}$.

If $(M^n,g,\x^T,\lambda)$ is a Ricci soliton, then Theorem \ref{T:4.1} implies that
\begin{align}\label{4.7}&\frac{Ric(\p_{x_1},\p_{x_1})+ \<h(\p_{x_1},\p_{x_1}),\x^\perp\>}{g_{11}} = \frac{Ric(\p_{x_2},\p_{x_2})+ \<h(\p_{x_2},\p_{x_2}),\x^\perp\>}{g_{22}}.\end{align}
By applying \e{4.4}-\e{4.7}, we obtain  
\vskip.05in

(i) $1-f^2 +x_1f f'=0$ or 

(ii) $1+f'^2+ff''=0$.

\vskip.05in
\noindent {\it Case} (i): $1-f^2 +x_1f f'=0$. In this case,  we obtain $f(x_1)=\pm \sqrt{1+b^2 x_1^2}$ for some constant $b\ne 0$. Hence 
\begin{align}\label{4.8}\frac{Ric(\p_{x_1},\p_{x_1})- \<h(\p_{x_1},\p_{x_1}),\x^\perp\>}{g_{11}}=\frac{-b^2}{(1+b^2x_1^2(1+b^2))^2},\end{align}
 is non-constant. Therefore $(M^n,g,\x^T,\lambda)$ cannot be a Ricci soliton.

\vskip.05in
\noindent {\it Case} (ii): $1+f'^2+ff''_1=0$. In this case,  we have $f(x)=\pm \sqrt{b^2- (x_1+c)^2}$ for some constant $b,c$. Hence  \begin{align}\label{4.9}\frac{Ric(\p_{x_i},\p_{x_i})- \<h(\p_{x_i},\p_{x_i}),\x^\perp\>}{g_{11}}=\frac{2-b^2+c^2-cx_1}{b^2},\;\; i=1,\ldots,n,\end{align}
which is a constant if and only if $c=0$.  When $c=0$, $M^n$ is an open portion of a hypersphere, which is obviously a Ricci soliton.
\end{proof}

\section{Ricci solitons on hypersurfaces with constant mean curvature}

First, we provide examples of Ricci solitons on Euclidean hypersurfaces with constant mean curvature.

\begin{example} \label{E:5.1} {\rm Let $k$ be a natural number such that $1\leq k\leq n-1$.
Consider the spherical hypercylinder $\phi:S^k(r)\times \mathbb E^{n-k}\to  \mathbb E^{n+1}$ defined by
$$\big\{({\bf y},x_{k+2},\ldots,x_{n+1})\in \mathbb E^{n+1}:  {\bf y}\in \mathbb E^{k+1}\;{\rm and} \; \<{\bf y},{\bf y}\>=r^2\big\}.$$
It is direct to verify that the spherical hypercylinder $S^k(\!\sqrt{k-1})\times \mathbb E^{n-k}$  in $\mathbb E^{n+1}$ satisfies \e{4.1} for $\lambda=1$ whenever $k\geq 2$. Hence $(S^k(\!\sqrt{k-1})\times \mathbb E^{n-k},g,\x^T,\lambda)$ with $ k\geq 2$ is a shrinking Ricci soliton with $\lambda=1$. Similarly, for any $r>0$, the circular 
hypercylinder $S^1(r)\times \mathbb E^{n-1}\subset \mathbb E^{n+1}$ is also a trivial Ricci soliton.
Obviously, such hypercylinders have constant mean curvature.
}\end{example}

Now, we provide the following classification of Ricci solitons on Euclidean hypersurfaces with constant mean curvature.

\begin{theorem} \label{T:5.1} Let $(M^n,g,\x^T,\lambda)$ be a  Ricci soliton on a hypersurface $M^n$ of $\mathbb E^{n+1}$. If $M^n$ has constant mean curvature, then it is one of the following hypersurfaces:\begin{enumerate}
\item[{\rm (a)}] A hyperplane through the origin $o$.
\item[{\rm (b)}] A hypersphere centered at the origin.
\item[{\rm (c)}] An open part of a circular hypercylinder $S^1(r)\times \mathbb E^{n-1}$, $r>0$.
\item[{\rm (d)}] An open part of a spherical hypercylinder $S^k(\sqrt{k-1})\times \mathbb E^{n-k}$, $2\leq k\leq n-1$.\end{enumerate}\end{theorem}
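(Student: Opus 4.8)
The plan is to combine the pointwise structure from Proposition~\ref{P:4.1} with the constant mean curvature hypothesis to pin down the principal curvatures, and then to recover the global shape of $M^n$ from a case analysis on how many principal curvatures are nonzero. By Proposition~\ref{P:4.1}, at each point $M^n$ has at most two distinct principal curvatures $\kappa_1,\kappa_2$ given by \e{4.3}. Since the mean curvature $\alpha$ is assumed constant, the only quantity in \e{4.3} that can still vary is the support function $\rho=\<\x,N\>$. The first step is therefore to understand how $\rho$ behaves. I would differentiate $\rho$ along $M^n$: for $X$ tangent to $M^n$, using \e{2.2} and the fact that $\tilde\nabla_X\x=X$ (the position field is concurrent), one computes $X\rho=\<X,N\>+\<\x,\tilde\nabla_X N\>=-\<A X,\x^T\>=-\<A_N X,\x^T\>$. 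This ties the variation of $\rho$ directly to the shape operator applied to the tangential position field.

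The main case division is according to the number of distinct principal curvatures. \textbf{Case 1: $M^n$ is totally geodesic.} Then both $\kappa_i$ vanish identically, $M^n$ is (an open part of) a hyperplane, and the soliton equation \e{4.1} forces $\<h(X,Y),\x^\perp\>=0$, which together with $Ric=0$ gives $\lambda=1$; a hyperplane satisfies this precisely when it passes through the origin, giving case (a). \textbf{Case 2: $M^n$ is totally umbilical} (one principal curvature $\kappa\ne 0$, of multiplicity $n$). Then $\alpha=\kappa$ is a nonzero constant, so $M^n$ is an open part of a hypersphere; the soliton condition \e{4.1} then forces the center to be the origin, yielding case (b). \textbf{Case 3: exactly two distinct principal curvatures.} Here I expect the real work. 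Constant $\alpha$ forces, via \e{4.3}, that $\kappa_1+\kappa_2=n\alpha+\rho$ and $\kappa_1\kappa_2=1-\lambda$ (from the product of the two roots), so $\rho$ must in fact be constant as well once the multiplicities are fixed and the trace is recomputed consistently. From $X\rho=-\<A_N X,\x^T\>=0$ I would deduce that $A_N$ annihilates $\x^T$, i.e.\ $\x^T$ lies in the kernel of the shape operator, so $\x^T$ belongs to the distribution of the zero principal curvature. This identifies one of the two principal curvatures as $0$, and the structure becomes that of a cylinder: the zero-curvature distribution integrates to Euclidean factors and the nonzero-curvature distribution to spherical factors.

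To make the cylinder structure rigorous I would invoke the doubly-warped/twisted-product machinery of Section~3, in particular Theorem~\ref{T:3.2}: the two principal distributions $\mathcal D_1$ (eigenvalue $0$) and $\mathcal D_2$ (eigenvalue $\kappa_2\ne0$) are each totally umbilical, and constancy of the curvatures and of $\rho$ upgrades them to spherical (respectively totally geodesic) foliations, so that $M^n$ splits as a warped/doubly-warped product. Matching this splitting against the explicit rotational model of Lemma~\ref{L:4.1}, and using that lemma to rule out every non-cylindrical profile, forces $M^n$ to be an open part of $S^k(r)\times\mathbb E^{n-k}$. A final computation with the soliton equation \e{4.1}, exactly as in Example~\ref{E:5.1}, fixes the radius: $k=1$ allows any $r>0$ (case (c)), while $k\ge 2$ forces $r=\sqrt{k-1}$ (case (d)). The principal obstacle I anticipate is precisely Case~3—specifically, passing from the pointwise two-eigenvalue data to a genuine global product decomposition, since one must verify that the eigenvalue distributions are integrable with the correct (spherical versus geodesic) second fundamental form, and that constancy of $\alpha$ together with the derivative identity for $\rho$ genuinely forces one eigenvalue to vanish rather than merely constraining the pair.
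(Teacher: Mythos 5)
Your reduction tracks the paper's own proof step for step through the decisive middle portion: Proposition \ref{P:4.1} gives at most two distinct principal curvatures; the one-curvature (umbilical) case yields (a) and (b); in the two-curvature case the trace relation between $\k_1,\k_2$, $\alpha$ and $\rho$ (the paper's \eqref{5.3}) together with constancy of $\alpha$ forces $\rho$ to be constant; and differentiating the support function gives exactly the paper's identity \eqref{5.4}, $0=X\rho=-\<h(X,\x^T),N\>$, so $A_N\x^T=0$ and, when $\x^T\ne 0$, one principal curvature vanishes. Two small repairs are needed even in this shared part: first, the step ``$\rho$ is constant'' deserves justification (squaring \eqref{5.3} produces a quadratic in $\rho$ with constant coefficients whose leading coefficient is $n^2-(2p-n)^2=4p(n-p)\ne 0$, so $\rho$ is locally constant); second, you omit the subcase $\x^T=0$, where $A_N\x^T=0$ is vacuous --- there $\x$ is normal, so $\<\x,\x\>$ is constant and one lands again in case (b), as the paper records.

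The genuine gap is your endgame for the two-curvature case. You yourself note that $\k_1\k_2=1-\lambda$ is the product of the roots in Proposition \ref{P:4.1}; once one curvature vanishes this gives $\lambda=1$ immediately, and the paper then finishes in one line by invoking Theorem \ref{T:4.2}, the classification for $\lambda=1$, from which constant mean curvature selects cases (c) and (d). You never draw the conclusion $\lambda=1$ and never cite Theorem \ref{T:4.2}; your substitute --- Theorem \ref{T:3.2} plus ``matching against the rotational model of Lemma \ref{L:4.1}'' --- does not work as described. Lemma \ref{L:4.1} is a statement about hypersurfaces of revolution and its conclusion is a \emph{hypersphere}; it can neither certify a cylinder nor ``rule out every non-cylindrical profile,'' and you give no argument that $M^n$ is rotational (the cylinders $S^k(r)\times\mathbb E^{n-k}$ with $n-k\geq 2$ are not of that rotational form, so no such argument could succeed). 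If you wish to avoid quoting Theorem \ref{T:4.2}, the efficient observation is that once $\alpha$, $\rho$ and $\lambda$ are all constant, the curvature formula in Proposition \ref{P:4.1} makes $\k_1$ and $\k_2$ themselves constant, so $M^n$ is an isoparametric hypersurface of $\mathbb E^{n+1}$ with principal curvatures $\{0,\k\}$; the classical Levi-Civita--Segre classification then gives $S^k(r)\times\mathbb E^{n-k}$ directly, and the soliton equation \eqref{4.1}, computed as in Example \ref{E:5.1}, pins down $r=\sqrt{k-1}$ for $k\geq 2$. As written, however, your Case 3 does not actually reach the cylinders, so the proof is incomplete at precisely the point you flagged as the ``real work.''
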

\begin{proof} Assume that $(M^n,g,\x^T,\lambda)$ is a  Ricci soliton on a hypersurface $M^n$ of $\mathbb E^{n+1}$. Then it follows from Proposition \ref{P:4.1} that $M^n$ has at most two distinct principal curvatures. If $M^n$ has only one principal curvature, then it is  totally umbilical. In this case we obtain either case (a) or case (b).

If $M^n$ has two distinct principal curvatures, then Proposition \ref{P:4.1} implies that the two principal curvatures are given respectively by
\begin{equation}\begin{aligned} \label{5.1} &\kappa_1=\frac{n\alpha+\rho + \sqrt{(n\alpha+\rho)^2+4-4\lambda}}{2},\;\; \\&\kappa_2=\frac{n\alpha+\rho - \sqrt{(n\alpha+\rho)^2+4-4\lambda}}{2}.\end{aligned}\end{equation}
Let us assume that the multiplicities of $\k_1$ and $\k_2$ are $p$ and $n-p$, respectively.
Then we find from \e{5.1} that 
\begin{align} \label{5.3} (2-n)n\alpha =n\rho +(2p-n)\sqrt{(n\alpha+\rho)^2+4-4\lambda}.\end{align}

Suppose that $M^n$ has constant mean curvature $\alpha$. Then it follows from \e{5.3} that the support function $\rho=\<\x,N\>$ is constant. Thus we have
\begin{align}\label{5.4} 0=X\rho=-\<\x,A_NX\>=-\<h(X,\x^T),N\>.\end{align}

If $\x^T\ne 0$, then \e{5.4} implies that one of $\k_1,\k_2$ is zero. So we obtain $\lambda=1$. In this case, we obtain case (c) or case (d) by Theorem \ref{T:4.2}. 

If $\x^T=0$, then $\x$ is normal to $M^n$. Thus $\<\x,\x\>$ must be a constant. Therefore in this case we obtain case (b) of the theorem.
\end{proof}

\section{Classification of Ricci solitons on Euclidean hypersurfaces}

The main purpose of this paper is to prove the following theorem which classifies Ricci solitons on Euclidean hypersurfaces  arisen from the position vector field.

\begin{theorem} \label{T:6.1} Let $(M^n,g,\x^T,\lambda)$ be a Ricci soliton on a hypersurface of $M^n$ of $\mathbb E^{n+1}$. Then $M^n$ is one of the following hypersurfaces of $\mathbb E^{n+1}:$
\begin{enumerate}
\item[{\rm (1)}] A hyperplane through the origin $o$.
\item[{\rm (2)}] A hypersphere centered at the origin.
\item[{\rm (3)}] An open part of a flat hypersurface generated by lines through the origin $o$; 
\item[{\rm (4)}] An open part of a circular hypercylinder $S^1(r)\times \mathbb E^{n-1}$, $r>0$;
\item[{\rm (5)}] An open part of a spherical hypercylinder $S^k(\sqrt{k-1})\times \mathbb E^{n-k}$, $2\leq k\leq n-1$.\end{enumerate}\end{theorem}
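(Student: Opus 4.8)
The plan is to reduce the statement to the two classifications already available, namely Theorem~\ref{T:5.1} for the constant mean curvature case and Theorem~\ref{T:4.2} for the case $\lambda=1$, by showing that the only genuinely new possibility, non-constant mean curvature, is forced into case~(3). I work on the open dense set where the number of distinct principal curvatures is locally constant. The starting point is Proposition~\ref{P:4.1}: where $M^n$ has two distinct principal curvatures $\kappa_1,\kappa_2$ of multiplicities $p$ and $n-p$, their elementary symmetric functions are $\kappa_1+\kappa_2=n\alpha+\rho$ and $\kappa_1\kappa_2=\lambda-1$. The identity $\kappa_1\kappa_2=\lambda-1$, that the product of the principal curvatures is the constant $\lambda-1$, will be the main algebraic lever.

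First I would dispose of the constant mean curvature case: if $\alpha$ is constant, Theorem~\ref{T:5.1} applies directly and gives cases (1), (2), (4) and~(5). So I may assume $\alpha$ is non-constant. Since a totally umbilical Euclidean hypersurface is an open part of a hyperplane or a hypersphere, each of constant mean curvature, the hypersurface $M^n$ must then carry exactly two distinct principal curvatures $\kappa_1$ (multiplicity $p$) and $\kappa_2$ (multiplicity $n-p$), with principal distributions $\mathcal D_1,\mathcal D_2$.

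Next comes the Codazzi analysis, using $\tilde R=0$ for $\mathbb E^{n+1}$ in \e{2.6}: each $\mathcal D_i$ is integrable, and a principal curvature of multiplicity $\ge 2$ is constant along its own distribution, with leaves that are spherical in $M^n$ and hence extrinsic spheres of $\mathbb E^{n+1}$. If both $p\ge 2$ and $n-p\ge 2$, then each of $\kappa_1,\kappa_2$ is constant along its own distribution, and since $\kappa_1\kappa_2=\lambda-1$ is constant each is then constant along the other distribution as well; thus both are globally constant and $\alpha$ is constant, contrary to assumption (for $\lambda=1$ this conclusion is instead supplied by Theorem~\ref{T:4.2}, whose only two-curvature output with both multiplicities $\ge2$ is the constant mean curvature hypersurface~(5)). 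Hence one multiplicity equals $1$ and the other equals $n-1$; say $\kappa_1$ has multiplicity $n-1\ge 2$ (the case $n=2$ being handled directly), so $\kappa_1$ is constant along $\mathcal D_1$.

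Suppose first $\kappa_1\ne 0$. Then $\kappa_2=(\lambda-1)/\kappa_1$ is also constant along $\mathcal D_1$, and substituting $X\kappa_2=0$ into the Codazzi identity for $\nabla_Z Z$ (with $Z$ spanning $\mathcal D_2$) shows the $\mathcal D_2$-trajectories are geodesics; thus $\mathcal D_2$ is a totally geodesic and $\mathcal D_1$ a spherical foliation, so Theorem~\ref{T:3.1}(c) expresses the metric as a warped product $\mathbb E^1\times_f S^{n-1}$, which as a hypersurface is the rotational form of Lemma~\ref{L:4.1}. Converting this abstract warped-product/spherical-foliation data into that explicit parametrization is the technical heart of the argument, and the step I expect to be the main obstacle. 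Granting it, Lemma~\ref{L:4.1} forces $M^n$ to be an open part of a hypersphere, which is umbilical and of constant mean curvature, contradicting both the two-curvature hypothesis and the non-constancy of $\alpha$. Hence $\kappa_1=0$, so $M^n$ is flat and $\kappa_1\kappa_2=0$ yields $\lambda=1$; Theorem~\ref{T:4.2} then leaves only case~(3), the flat hypersurface generated by lines through the origin, as the non-constant mean curvature possibility. Combining the two branches yields exactly the list (1)--(5).
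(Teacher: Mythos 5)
Your overall architecture is sound and reproduces the paper's skeleton (Proposition~\ref{P:4.1} gives at most two principal curvatures; the umbilical case gives (1)--(2); $\lambda=1$ is delegated to Theorem~\ref{T:4.2}; the remaining configurations are killed by contradiction), but you genuinely streamline the two heaviest branches of the paper's argument. The key lever is your explicit use of $\kappa_1\kappa_2=\lambda-1$, which is immediate from the quadratic in Proposition~\ref{P:4.1} but is never exploited in this form in the paper. Combined with the Codazzi facts $\nabla\kappa_1\in\mathcal D_2$, $\nabla\kappa_2\in\mathcal D_1$ (the paper's \eqref{6.8}), it gives, when both multiplicities are $\geq 2$ and $\lambda\neq 1$, that $\nabla\kappa_2=-\big((\lambda-1)/\kappa_1^2\big)\nabla\kappa_1$ lies in both orthogonal distributions, hence both curvatures are constant; this one-line argument replaces the paper's entire Case (2.1) --- the doubly warped product reduction via Theorem~\ref{T:3.2}, the isothermal computation \eqref{6.21}--\eqref{6.31}, and the Hessian system \eqref{6.34}--\eqref{6.43}. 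Likewise, in the multiplicity-$(1,n-1)$ case with nonzero multiple curvature, the same trick shows the simple curvature is constant along the big distribution, so Codazzi makes the rank-one distribution totally geodesic (not merely umbilical), and Theorem~\ref{T:3.1}(c) yields an honest warped product $I\times_f S^{n-1}(1)$ directly; this eliminates the paper's twisted-product branch, Case (2.2.2.b) (equations \eqref{6.72}--\eqref{6.85}), and, after arclength normalization $P\equiv 1$, also renders the subcase $P<f'$ of \eqref{6.71} vacuous, since $\kappa^2=(1-f'^2)/f^2\geq 0$ forces $f'^2<1$ when $\kappa\neq 0$. Your reorganization by constant versus non-constant mean curvature (Theorem~\ref{T:5.1} versus Theorem~\ref{T:4.2}) is logically equivalent to the paper's split by $\lambda$.

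Two steps, however, are asserted rather than proved. The serious one is the one you flag yourself: converting the warped product into the rotational parametrization of Lemma~\ref{L:4.1}. This is not a formality: a hypersurface whose multiplicity-$(n-1)$ curvature leaves are extrinsic spheres is a priori only a canal hypersurface (an envelope of a one-parameter family of spheres whose centers trace an arbitrary curve), and one must show the centers lie on a fixed line. The paper does this by integrating the Gauss--Weingarten system \eqref{6.63}--\eqref{6.68}: equation \eqref{6.64} yields $L=A(s)+f(s)B(y_2,\ldots,y_n)$ as in \eqref{6.69}, and the remaining equations force $A'(s)$ parallel to a fixed unit vector, giving \eqref{6.70}. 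Note also that the soliton condition is not translation-invariant, so reducing ``rotational about some axis'' to the normalized form used in Lemma~\ref{L:4.1} requires a word about the congruence being applied. The second gap is $n=2$, which you dismiss as ``handled directly'': there no multiplicity is $\geq 2$, so your entire Codazzi/warped-product machinery is unavailable, and a separate argument is genuinely needed --- the paper's Case (2.2.1) uses \eqref{6.44} to force $\x^\perp=0$ when the curvatures are distinct, whence $h(\x^T,\cdot)=0$ produces a vanishing principal curvature, contradicting $\kappa_1\kappa_2=\lambda-1\neq 0$. Finally, you cite as known that a multiplicity-$\geq 2$ curvature distribution is spherical \emph{in} $M^n$ (needed as a hypothesis in Theorem~\ref{T:3.1}(c)); this is classical, but the paper proves it via \eqref{6.47}--\eqref{6.52}, and a reference or a short derivation should accompany the claim.
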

\begin{proof} Let $(M^n,g,\x^T,\lambda)$ be a Ricci soliton on a Euclidean hypersurface $M^n$. It follows from Proposition \ref{P:4.1} that $M^n$ has at most two distinct principal curvatures.
If $M^n$ has only one principal curvature, then it is totally umbilical. Thus we obtain case (1) or case (2) of the theorem. Hence, from now on we may assume that $M^n$ has two distinct principal curvatures  $\k_1,\k_2$. Let us assume that their multiplicities are $m(\k_1)=p$ and $m(\k_2)=n-p$.
Hence we find from \e{5.1} that 
\begin{equation}\label{6.1}  n\alpha =\frac{n}{2}(n\alpha+\rho) + \frac{2p-n}{2} \sqrt{(n\alpha+\rho)^2+4-4\lambda}.\end{equation}
\vskip.05in

\noindent {\it Case} (1):  $\lambda=1$. In this case, the theorem follows from Theorem \ref{T:4.2}. 
\vskip.05in

\noindent {\it Case} (2):   $\lambda\ne 1$. Proposition \ref{P:4.1} implies $\k_1,\k_2\ne 0$.
Without loss of generality, we may put 
\begin{equation}\begin{aligned}\label{6.5} &h(e_i,e_j)= \delta_{ij} \k_1 N,\;\; h(e_i, e_\beta)=0,\;\; h(e_\beta,e_\gamma)=\delta_{\beta\gamma}\k_2 N,\\&\hskip.5in  i,j=1,\ldots,p;\;\; \beta,\gamma=p+1,\ldots,n, \end{aligned}\end{equation}
with respect to an orthonormal tangent frame $\{e_1,\ldots,e_n\}$ of $M^n$, where $\delta_{ij},\delta_{\beta\gamma}$ are Kronecker deltas, and $N$ is a unit normal vector field. 
Define the connection forms $\omega_i^j\, (i,j=1,\ldots,n)$ on $M^n$ by
\begin{align} \label{6.6}& \nabla_X e_A=\sum_{B=1}^n \omega_A^B(X)e_B,\;\; A=1,\ldots,n.\end{align}

We define two distributions $\mathcal D_1,\mathcal D_2$ by 
\begin{align}\label{6.7} \mathcal D_1={\rm Span}\{e_1,\ldots,e_p\},\;\; \mathcal D_2={\rm Span}\{e_{p+1},\ldots,e_n\}.\end{align}

\vskip.05in

\noindent {\it Case} (2.1): $2\leq p\leq n-2$. In this case, both $\k_1$ and $\k_2$ has multiplicity at least 2.
So we may derive  from \e{2.7}, \e{6.5} and the following equations $$(\bar \nabla_{e_i}h)(e_j,e_j)=(\bar \nabla_{e_j}h)(e_i,e_j), \;\; 1\leq i\ne j\leq p$$ of Codazzi that $e_i \k_1=0$ for $i=1,\ldots,p$. Thus  $\nabla \k_1\in \mathcal D_2$. Similarly, we also have $\nabla \k_2\in \mathcal D_1$. Thus we can choose $e_1,\ldots,e_n$ in such way that 
\begin{align}\label{6.8}\nabla\k_1=\varphi_1 e_n,\;\; \nabla\k_2=\varphi_2 e_1,\end{align}
for some functions $\varphi_1,\varphi_2$.
Also, it follows from \e{2.7}, \e{6.5} and  $$(\bar \nabla_{e_\beta}h)(e_i,e_j)=(\bar \nabla_{e_j}h)(e_i,e_\beta), \;\; 1\leq i, j\leq p;\;  p+1\leq \beta\leq n,$$ that
\begin{align} \label{6.9}& \omega^\beta_i (e_j)=\delta_{ij}\!\(\frac{e_\beta \k_1 }{\k_1 -\k_2}\) ,\;\; i,j=1,\ldots,p;\; \beta =p+1,\ldots,n.\end{align}  
Since $e_i\k_1=e_{\beta}\k_2=0$, we derive from \e{6.9} that
\begin{align} \label{6.10}& \omega^\beta_i (e_j)=\delta_{ij}\, e_\beta (\ln |\k_1-\k_2|),\;\; i,j=1,\ldots,p;\; \beta =p+1,\ldots,n  \end{align}

Similarly, we also have
\begin{align} \label{6.11}& \omega^j_\beta (e_\gamma)=\delta_{\beta\gamma}\,e_j (\ln |\k_1-\k_2|), \;\; j=1,\ldots,p;\; \beta,\gamma =p+1,\ldots,n. \end{align}  Consequently,  \e{6.8},  \e{6.10} and \e{6.11} give
\begin{align} \label{6.12}& \omega^\beta_i (e_j)=\delta_{ij}\delta_{\beta n} \,e_n(\ln |\k_1-\k_2|),
\\ \label{6.13}& \omega^j_\beta (e_\gamma)=\delta_{1j}\delta_{\beta \gamma}\,e_1(\ln |\k_1-\k_2|),
\end{align}
for $ i,j=1,\ldots,p;\; \beta,\gamma =p+1,\ldots,n$.
From \e{6.9} we obtain $\<[e_i,e_j],e_\beta\>=0$ for $1\leq i,j\leq p$; $p+1\leq \beta\leq n$. Hence $\mathcal D_1$ is an integrable distribution. Similarly, $\mathcal D_2$ is also integrable. 

Let $L_1^p$ be a leaf of $\mathcal D_1$ and $L_2^{n-p}$ a leaf of $\mathcal D_2$.
Then it follows from \e{6.10} and \e{6.11} that $L_1^p$ and $L_2^{n-p}$ are totally umbilical submanifolds of $M^n$. Thus  Theorem \ref{T:3.1} implies that $M^n$ is a doubly twisted product $L_1^p\times_{(f_1,f_2)} L^{n-p}_2$ whose metric tensor is given by
\begin{align}\label{6.14} g=f_1^2 g_1+f_2^2 g_2,\end{align}
where $g_1$ and $g_2$ are the metric tensors of $L_1^p$ and $L_2^{n-p}$, respectively.

It follows from \e{6.12} and \e{6.13} that the mean curvature vectors $\mathring{H}_1$ and $\mathring{H}_2$ of $L_1^p$ and $L_2^{n-p}$ in $M^n$ are given respectively by
\begin{equation}\begin{aligned} \label{6.15}&\mathring{H}_1=\{e_n(\ln |\k_1-\k_2|)\}e_n,
\;\; \\& \mathring{H}_2=\{e_1(\ln |\k_1-\k_2|)\}e_1,\end{aligned}\end{equation}
 Therefore  \e{6.5} and \e{6.15} show that the mean curvature vectors $\hat{H}_1$ and $\hat{H}_2$ of $L_1^p$ and $L_2^{n-p}$ in $\mathbb E^{n+1}$ are given respectively by
\begin{align} \label{6.16}&\hat{H}_1=\mathring{H}_1+\k_1 N,
\;\;  \hat{H}_2=\mathring{H}_2+\k_2 N.\end{align}

Since $L_1^p$ and $L_2^{n-p}$ are totally umbilical in $\mathbb E^{n+1}$, both $\hat H_1$ and $\hat H_2$ are parallel in their normal bundles in $\mathbb E^{n+1}$ (cf. \cite{book73}). Hence the normal connection $\hat D^1$  of $L_1^p$ and $\hat{D}^2$  of  $L_2^{n-p}$ in $\mathbb E^{n+1}$ satisfy
\begin{align} \label{6.17}&0= \hat D^1_X \hat{H}_1 = \hat D^2_Z \hat{H}_2\end{align} 
 for any $X\in TL_1^p$ and $Z\in TL_2^{n-p}$.
Thus, after applying  Weingarten's formula of $L_1^p$ in $\mathbb E^{n+1}$, we find
\begin{align} \label{6.18}&\tilde \nabla_{e_i} \hat H_1=-\hat A^1_{\hat H_1} e_i=\psi_1 e_i\end{align}
for some function $\psi_1$ on $L_1^p$, where $\hat A^1$ denotes the shape operator of $L_1^p$ in $\mathbb E^{n+1}$. Hence we have
\begin{equation}\begin{aligned} \label{6.19} \psi_1 e_i&=\tilde \nabla_{e_i} \hat{H}_1=\tilde \nabla_{e_i} \mathring{H}_1+\tilde \nabla_{e_i}(\k_1 N)
\\& = \nabla_{e_i} \mathring{H}_1+ h(e_i,\mathring{H}_1) -\k_1 A_N e_i +D_{e_i}(\k_1 N)
\\& =-\mathring{A}^1_{\mathring{H}_1}e_i+\mathring{D}^1_{e_i}\mathring{H_1}+ h(e_i,\mathring{H}_1)  -\k_1^2 e_i  + (e_i\k_1) N \end{aligned}\end{equation}
for $i=1,\ldots,p$, where $\mathring{A}^1$ and $\mathring{D}^1$ are the shape operator and the normal connection of $L_1^p$ in $M^n$, respectively. 
It follows from \e{6.19} that $\mathring{D}^1_{e_i}\mathring{H_1}=0$. Thus the mean curvature vector $\mathring{H_1}$ of $L_1^p$ in $M^n$ is parallel in the normal bundle. Hence $L_1^p$ is an extrinsic sphere in $M^n$. Therefore $\mathcal D_1$ is a spherical distribution in $M^n$. Similarly, $\mathcal D_2$ is also a spherical distribution in $M^n$. Consequently, by Theorem \ref{T:3.2}, $L_1^p\times_{(f_1,f_2)} L^{n-p}_2$ is  a doubly warped product whose metric tensor takes the form:
\begin{align}\label{6.20} g=F^2 g_{L_1}+G^2 g_{L_2},\end{align}
where $F\in C^\infty(L_2^{n-p})$, $G\in C^\infty(L_1^p)$. Moreover, since $L_1^p$, $L_2^{n-p}$ are non-totally geodesic totally umbilical submanifolds of $\mathbb E^{n+1}$,  we may assume  $L^p_1=S^p(1)$ and $L_2^{n-p}=S^{n-p}(1)$ locally.
Hence $M^n$ is locally the doubly warped product $S^p(1)\times_{(F,G)} S^{n-p}(1)$.  Thus, if we choose $\{u_1,\ldots,u_p\}$ and $\{v_{p+1},\ldots,v_n\}$ to be isothermal coordinate systems of $S^p(1)$ and  $S^{n-p}(1)$, respectively, then we obtain
\begin{align} \label{6.21}& g= F^2 U^2 \sum_{j=1}^p du_j^2+G^2V^2\!\sum_{\gamma=p+1}^n \!dv_\gamma^2,
 \end{align} 
 where $F=F(v_{p+1},\ldots,v_{n}), G=G(u_{1},\ldots,u_{p})$, and
 \begin{align} \label{6.22}& U=\frac{2}{1+\sum_{i=1}^p u_i^2},\;\;\;\;  V=\frac{2}{1+\sum_{\beta=p+1}^n v_\beta^2}. \end{align}

Put $$\p_{u_i}=\frac{\p}{\p{u_i}},\; \p_{v_\beta}=\frac{\p}{\p{v_\beta}},\; G_i=\frac{\p G}{\p u_i},\; F_\beta=\frac{\p F}{\p {v_\beta}}.$$ It follows from \e{6.21}, \e{6.22} and a direct computation that the Levi-Civita connection $\nabla$ of $(M^n,g)$ satisfies
 \begin{equation}\begin{aligned} \label{6.23}& \nabla_{\p_{u_i}}\!\p_{u_i}=-U u_i \p_{u_i}\!+U\sum_{j\ne i} u_j \p_{u_j}\! -\frac{U^2 F}{V^2G^2}\! \sum_{\beta=p+1}^n \! F_{\beta}\p_{v_\beta},\;\; i=1,\ldots,p;
 \\& \nabla_{\p_{u_i}}\!\p_{u_j}=-Uu_j \p_{u_i}-Uu_i \p_{u_j},\;\; 1\leq i\ne j\leq p,
 \\& \nabla_{\p_{u_i}}\!\p_{v_\beta}=\frac{F_{\beta}}{F}\p_{u_i}+\frac{G_{i}}{G}\p_{v_\beta},\;\; i=1,\ldots,p;\;\;\beta=p+1,\ldots,n;
 \\& \nabla_{\p_{v_\beta}}\!\p_{v_\beta}=-V v_\beta  \p_{v_\beta}\!+V \sum_{\gamma\ne \beta} v_\gamma \p_{v_\gamma}\! -\frac{V^2 G}{U^2F^2}\! \sum_{i=1}^pG_{i}\p_{u_i},\;\; \beta=p+1,\ldots,n;
\\& \nabla_{\p_{v_\beta}}\!\p_{v_\gamma}=-V v_\gamma \p_{v_\beta}- V v_\beta \p_{v_\gamma},\;\; p+1\leq \beta\ne \gamma\leq n. \end{aligned}\end{equation}
By applying \e{6.23} we find 
\begin{align} \label{6.24}g(R(\p_{u_i},\p_{u_j})\p_{u_j},\p_{v_\beta})=  \frac{U^2F}{G} G_{i}F_\beta,\;\; 1\leq i\ne j\leq p;\; \beta=p+1,\ldots,n.\end{align}
Also, it follows from \e{6.5} and Gauss' equation that $g(R(\p_{u_i},\p_{u_j})\p_{u_j},\p_{v_\beta})=0.$ 
By comparing  this with \e{6.24} we get $G_{i}F_\beta=0$ for $i=1,\ldots, p$; $\beta=p+1,\ldots,n$. Thus,  either $F$  or $G$ is a nonzero constant. 
Without loss of generality, we may assume that  $F$ is a nonzero constant. 
So, after applying a suitable dilation we get $F=1$. Hence $g$ is an ordinary warped product, i.e., 
\begin{align} \label{6.25}& g= U^2 \sum_{j=1}^p du_j^2
+G^2 V^2\sum_{\gamma=p+1}^n dv_\gamma^2.
 \end{align} 
Consequently, \e{6.23} reduces to
 \begin{equation}\begin{aligned} \label{6.26}& \nabla_{\p_{u_i}}\!\p_{u_i}=-U u_i  \p_{u_i}\!+U \sum_{j\ne i} u_j \p_{u_j},\;\;  i=1,\ldots,p;
 \\& \nabla_{\p_{u_i}}\!\p_{u_j}=- Uu_j \p_{u_i}-U u_i \p_{u_j},\;\; 1\leq i\ne j\leq p,
 \\& \nabla_{\p_{u_i}}\!\p_{v_\beta}=\frac{G_{i}}{G}\p_{v_\beta},\;\; i=1,\ldots,p;\;\;\beta=p+1,\ldots,n;
 \\& \nabla_{\p_{v_\beta}}\!\p_{v_\beta}=- Vv_\beta  \p_{v_\beta}\!+ V \sum_{\gamma\ne \beta} v_\gamma \p_{v_\gamma}\! -\frac{V^2 G}{U^2}\! \sum_{i=1}^p G_{i}\p_{u_i},\;\; \beta=p+1,\ldots,n;
\\& \nabla_{\p_{v_\beta}}\!\p_{v_\gamma}=- V v_\gamma \p_{v_\beta}- V v_\beta \p_{v_\gamma},\;\; p+1\leq \beta\ne \gamma\leq n. \end{aligned}\end{equation}
Therefore, after applying \e{6.5}, \e{6.26} and Gauss' equation, we may derive that
 \begin{align} \label{6.27}&\k_1^2 =K(\p_{u_i},\p_{u_j})= 1,
 \\& \label{6.28} \k_1\k_2 = K(\p_{u_i},\p_{v_\beta})= - \frac{H^G(\p_{u_i},\p_{u_i})}{U^2 G},
  \\& \label{6.29} H^G(\p_{u_i},\p_{u_j})=0,
  \\& \label{6.30}   \k_2^2 = K(\p_{v_\beta},\p_{v_\gamma})=1-  \frac{|\nabla G|^2}{G^2}<1,
\end{align}
for $1\leq i\ne j\leq p;\,p+1\leq \beta\ne \gamma\leq n$, where  $K(X,Y)$ denotes the sectional curvature of the plane section spanned by $X,Y$. Notice that \e{6.29} follows from \e{2.9}, \e{6.26} and $\<R(\p_{u_i},\p_{v_\beta})\p_{u_j},\p_{v_\beta}\>=0$ with $i\ne j$.
 
 It follows from \e{6.27} that $\k_1=\pm 1$. Without loss of generality, we may put $\k_1=1$.
Thus we find from \e{6.5} and \e{6.25} that
\begin{align}\label{6.31} h(\p_{u_i},\p_{u_j})=\delta_{ij} U^2N, \;\; h(\p_{v_\beta},\p_{v_\gamma})=\delta_{\beta\gamma}\k_2 G^2 V^2 N,\;\; h(\p_{u_i},\p_{v_\beta})=0,\end{align}
for $i,j=1,\ldots,p;\, \beta,\gamma=p+1,\ldots,n$. Thus, by applying \e{6.16}, \e{6.26}, \e{6.31} and $(\bar\nabla_{\p_{u_i}}h)(\p_{v_\beta},\p_{v_\beta})=(\bar\nabla_{\p_{v_\beta}}h)(\p_{u_i},\p_{v_\beta})$, we find
\begin{align}\label{6.32} \frac{\p \k_2}{\p {u_i}}=(1-\k_2)\frac{G_i}{G},\;\; i=1,\ldots,p.\end{align}
By integrating \e{6.32} we obtain
\begin{align}\label{6.33} \k_2=1-\frac{c}{G}\end{align}
for some constant $c\ne 0$. Therefore \e{6.28}, \e{6.30} and \e{6.33} yield 
\begin{align}\label{6.34} &H^G(e_i,e_j)=\delta_{ij}(c-G),\;\;\\&\label{6.35} |\nabla G|^2=c(2G-c),\end{align}
for any orthonormal basis $\{e_1,\ldots,e_p\}$ of $\,{\rm Span}\{\p_{u_1},\ldots, \p_{u_p}\}$.

To solve the PDE system \e{6.34}-\e{6.35}, we apply a spherical coordinate system $\{x_1,\ldots,x_p\}$ for the first factor $S^p(1)$ of the warped product $S^p(1)\times_G S^{n-p}(1)$ so that the metric tensor $g_1$ on $S^p(1)$ is given by
\begin{align}\label{6.36}g_1=dx_1^2+\cos^2x_1dx_2^2+\cdots+\prod_{k=1}^{p-1}\cos^2
x_k dx^2_p.\end{align} The Levi-Civita connection $\mathring{\nabla}$ of  $g_1$ satisfies 
\begin{equation} \begin{aligned}\label{6.37} &  \mathring{\nabla}_{\partial_{x_1}}\! {\partial_{x_1}}=0, 
\\ &\mathring{\nabla}_{\partial_{x_i}}\! {\partial_{x_j}}=-(\tan x_i)  {\partial_{x_j}},\quad 1\leq i<j,
\\ &\mathring{\nabla}_{\partial_{x_2}}\! {\partial_{x_2}}=\frac{\sin 2x_1}{2}{\partial_{x_1}},
\\& \hskip.7in \cdots
\\ &\mathring{\nabla}_{\partial_{x_p}}\! {\partial_{x_p}}=\sum_{k=1}^{p-1}\({{\sin 2x_k}\over
2}\!\prod_{\ell =k+1}^{p-1} \cos^2x_{\ell}\)
{\partial_{x_k}}.\end{aligned}\end{equation}
It follows from \e{6.34} and \e{6.36} that 
\begin{align}\label{6.38} H^G(\partial_{x_i},\partial_{x_j})=\delta_{ij}(c-G)\! \(\prod_{k=1}^{i-1} \cos x_k\! \)\! \(
\prod_{\ell=1}^{j-1}\cos x_\ell \!\),\end{align}
In particular, for $i=j=1$, we find from \e{2.9}, \e{6.37} and \e{6.38} that
$G_{x_1 x_1}=c-G,$
which gives
\begin{align}\label{6.39} G=c+A_0(x_2,\ldots,x_p)\cos x_1+B_0(x_2,\ldots, x_p)\sin x_1\end{align}
for some functions $A_0(x_2,\ldots,x_p)$ and $B_0(x_2,\ldots,x_p)$.

For $i=1$ and $j=2,\ldots, p$, we derive from \e{2.9}, \e{6.37} and \e{6.38} that
\begin{align}\label{6.40} 0=H^G(\partial_{x_1},\partial_{x_j})=G_{x_1 x_j}-\tan x_1G_{x_j}.\end{align}
By substituting \e{6.39} into \e{6.40} we obtain $\p B_0/\p x_j=0$ for $j=2,\ldots,p$. Thus $B_0$ is a constant, say $c_1$. So \e{6.39} becomes
\begin{align}\label{6.41}G=c+A_0(x_2,\ldots,x_p)\cos x_1+c_1\sin x_1.\end{align}
Similarly, by substituting \e{6.41} into \e{6.38} for $i=j=2$ and applying \e{2.9} and \e{6.37}, we obtain
\begin{equation} \begin{aligned}\label{6.42}&G=c+c_1\sin x_1+A_1(x_3,\ldots,x_p)\cos x_1\cos x_2\\&\hskip.5in +B_1(x_3,\ldots,x_p)\cos x_1\sin x_2 \end{aligned}\end{equation}
for some functions $A_1(x_3,\ldots,x_p)$, $B_1(x_3,\ldots,x_p)$. Continuing such procedures for sufficient many times, we arrive that
\begin{equation} \begin{aligned}\label{6.43}&G=c+c_1\sin x_1 +\cdots +c_{p-1} \sin x_{p-1}\prod_{j=1}^{p-2}\cos x_j +c_p\prod_{j=1}^p\cos x_j,\end{aligned}\end{equation}
 where $c,c_1,\ldots,c_p$ are real numbers, not all zero. 
 On the other hand, by substituting \e{6.43} into \e{6.35}, we find $c=c_1=\cdots=c_p=0$, which is a contradiction.
Hence this case is impossible.

\vskip.05in
\noindent {\it Case} (2.2): $p=1\; or\; n-1$. Without loss of generality, we may assume $p=1$. Now, we divide this into two cases.

\vskip.05in
\noindent {\it Case} (2.2.1): $n=2$. In this case, we have $Ric(X,Y)=\tau g(X,Y)$, $X,Y \in TM^2$, where $\tau$ is the Gauss curvature of $M^2$. Hence  \e{4.1} of Theorem \ref{T:4.1} gives 
\begin{align} \label{6.44} \tilde g(h(X,Y),\x^\perp)=(\lambda-1-\tau)g(X,Y).\end{align}

Since $M^2$ is assumed to have two distinct principal curvatures, \e{6.44} implies that $\x^\perp=0$. Hence $\x$ must  be tangent to $M^2$. 
So, it follows from $\tilde \nabla_X \x=X$ that the second fundamental form satisfies $h(\x,X)=0$ for any $X\in TM^2$. Thus at least one $\k_1,\k_2$ is zero, which is a contradiction. 

\vskip.05in
\noindent {\it Case} (2.2.2): $n\geq 3$. In this case, we have $m(\k_1)=1, m(\k_2)=n-1\geq 2$. Moreover, \e{6.5} reduces to
\begin{equation}\begin{aligned}\label{6.45} &h(e_1,e_1)=  \k_1 N,\;\; h(e_i, e_\beta)=0,\;\; h(e_\beta,e_\gamma)=\delta_{\beta\gamma}\k_2 N,
\;\; \\& \hskip.7in\beta,\gamma=2,\ldots,n, \end{aligned}\end{equation}
with respect to an orthonormal tangent frame $\{e_1,\ldots,e_n\}$ of $M^n$

From \e{2.7}, \e{6.45} and $(\bar \nabla_{e_\alpha}h)(e_\beta,e_\gamma)=(\bar \nabla_{e_\beta}h)(e_\alpha,e_\gamma),\, 2\leq \beta\ne\gamma\leq n$, we find  $e_\beta \k_2=0$ for $\beta=2,\ldots,n$. So we get 
\begin{align}\label{6.46}\nabla\k_2=fe_1\in \mathcal D_1\end{align} for some function $f$. 
Also, \e{2.7}, \e{6.45} and $(\bar \nabla_{e_1}h)(e_\beta,e_\gamma)=(\bar \nabla_{e_\beta}h)(e_1,e_\gamma)$  give
\begin{align} \label{6.47}& \omega^1_\beta (e_\gamma)=\delta_{\beta\gamma}\! \(\frac{e_1 \k_2 }{\k_2 -\k_1}\) ,\;\; \beta,\gamma=2,\ldots,n.\end{align}  
From \e{6.47} we find $\<[e_i,e_j],e_1\>=0$. Thus $\mathcal D_2$ is integrable. Also, it follows from \e{6.47} that the second fundamental form $\mathring{h}$ of  a leaf $F^{n-1}$ of $\mathcal D_2$ in $M^n$ is given by
\begin{align} \label{6.48}& \mathring{h}(e_\beta,e_\gamma)=\delta_{\beta\gamma}\!\(\frac{e_1 \k_2 }{\k_2 -\k_1}\)e_1 ,\;\; \beta,\gamma=2,\ldots,n.\end{align}  
Hence  $F^{n-1}$ is a totally umbilical hypersurface of $M^n$.
It follows from \e{6.45} and \e{6.48} that the second fundamental form $\hat h$ and the mean curvature vector $\hat H$ of $F^{n-1}$ in $\mathbb E^{n+1}$ are given respectively by
\begin{align} 
\label{6.49}& \hat{h}(e_\beta,e_\gamma)=\delta_{\beta\gamma}\!\left\{\(\frac{e_1 \k_2 }{\k_2 -\k_1}\)e_1+\k_2 N\right\} ,\\& \label{6.50} \hat H=\mathring{H}+\k_2 N,\;\; \mathring{H}=\(\frac{e_1 \k_2 }{\k_2 -\k_1}\)e_1.\end{align}
From \e{6.49} we see that $F^{n-1}$ is also totally umbilical in $\mathbb E^{n+1}$. Hence $F^{n-1}$ is an open portion of an $(n-1)$-sphere $S^{n-1}$.
So the mean curvature vector $\hat H$ of $F^{n-1}$ is parallel in the normal bundle  in $\mathbb E^{n+1}$, i.e., $\hat D_X \hat H=0$ for $X\in TF^{n-1}$, where $\hat D$ is the normal connection  of $F^{n-1}$ in $\mathbb E^{n+1}$.
Now, by applying  Weingarten's formula for $F^{n-1}$ in $\mathbb E^{n+1}$, we find
\begin{align} \label{6.51}&\tilde \nabla_{e_\beta} \hat H=-\hat A_{\hat H} e_\beta=\eta e_\beta\end{align}
for some function $\eta$, where $\hat A$ denotes the shape operator of $F^{n-1}$ in $\mathbb E^{n+1}$. 
On the other hand, we find from \e{6.50}  and formulas of Gauss and Weingarten that
\begin{equation}\begin{aligned} \label{6.52} \tilde \nabla_{e_\beta} \hat{H}& =-\mathring{A}_{\mathring{H}}e_\beta+\mathring{D}_{e_\beta}\mathring{H}+ h(e_\beta,\mathring{H})  -\k_2^2 e_\beta  + (e_\beta\k_2) N \end{aligned}\end{equation}
for $\beta=2,\ldots,n$, where $\mathring{A}$ and $\mathring{D}$ are the shape operator and the normal connection of $F^{n-1}$ in $M^n$, respectively. Thus, as case (2.1), $F^{n-1}$ has parallel mean curvature vector  in the normal bundle in $M^n$. Consequently, $\mathcal D_2$ is a spherical distribution.

On the other hand, because $\mathcal D_1$ is of rank one,  $\mathcal D_1$ is integrable and its leaves are clearly totally umbilical in $M^n$. Therefore, by  Theorem \ref{T:3.2}, $M^n$ is locally a twisted-warped product $I\times_{(P,Q)} S^{n-1}(1)$ whose metric tensor is 
\begin{align}\label{6.53} g=P^2(s,y_2,\ldots,y_n) ds^2+f^2(s) g_{S^{n-1}},\end{align}
where $I$ is an open interval with arclength $s$, $P$ is function on $M^n$ and $g_{S^{n-1}}$ is the metric tensors of $S^{n-1}(1)$. In terms of a spherical coordinate system $\{y_2,\ldots,y_n\}$ of $S^{n-1}(1)$, \e{6.53} can be expressed as
\begin{align}\label{6.54} g=P^2 ds^2+ f(s)^2 \left\{dy_2^2+\cos^2y_2dy_3^2+\cdots+\prod_{k=2}^{n-1}\cos^2
y_k dy^2_n\right\}. \end{align}
Hence the Levi-Civita connection $\nabla$ of $M^n$ satisfies 
 \begin{equation}\begin{aligned} \label{6.55}& \nabla_{\p_{s}}\!\p_{s}=\frac{P_s}{P} \p_{s} 
 -\frac{P}{f^2(s)}\Bigg\{P_{y_2}\p_{y_2}+\sum_{\alpha=3}^{n}(\sec^2 y_2\cdots \sec^2 y_{\alpha-1}) P_{y_\alpha} \p_{y_\alpha}\Bigg\},
\\& \nabla_{\p_{s}}\!\p_{y_\beta}=\frac{P_{y_\beta}}{P} \p_{s} +\frac{f'}{f} \p_{y_\beta},\;\; 2\leq \beta\leq n,
 \\& \nabla_{\p_{y_2}}\!\p_{y_2}=-\frac{ff'}{P^2}\p_s,
 \\& \nabla_{\p_{y_3}}\!\p_{y_3}=-\frac{ff'}{P^2}(\cos^2 y_2) \p_s+\frac{\sin 2 y_2}{2} \p_{y_2} ,
 \\&\hskip1in \cdots 
 \\& \nabla_{\p_{y_n}}\!\p_{y_n}=-\frac{ff'}{P^2}\Bigg\{\prod_{\a=2}^{n-1}\cos^2 y_\a\! \Bigg\} \p_s
 +\sum_{\a=2}^{n-1}\Bigg\{\frac{\sin 2 y_\a}{2}\prod_{\gamma=\a+1}^{n-1}\cos^2 y_{\gamma}\Bigg\}\p_{y_\a}  ,  \\& \nabla_{\p_{y_\beta}}\!\p_{y_\gamma}=- (\tan {y_\beta}) \p_{y_\gamma},\;\;  2\leq\beta<\gamma\leq n.
 \end{aligned}\end{equation}
 By \e{6.45}, \e{6.55} and Gauss' equation, we derive from $0=g(R(\p_s,\p_{y_\beta})\p_{y_\gamma},\p_{y_\beta})$ that $f'(s)P_{y_\beta}=0$ for $\beta=2,\ldots,n$.
Consequently, we have 
\vskip.05in

(a) $P=P(s)$ or 
\vskip.02in

(b) $f$ is a constant, say $b\ne 0$. 

\vskip.05in
\noindent {\it Case} (2.2.2.a): $P=P(s)$. In this case, $M^n$ is  an ordinary warped product whose metric tensor is 
\begin{equation}\begin{aligned}\label{6.56} g=P^2(s) ds^2+ f(s)^2 \(dy_2^2+\cos^2y_2dy_3^2+\cdots+\prod_{\a=2}^{n-1}\cos^2 y_\a dy^2_n\). \end{aligned}\end{equation}
Hence the Levi-Civita connection $\nabla$ of $M^n$ satisfies 
 \begin{equation}\begin{aligned} \label{6.57}& \nabla_{\p_{s}}\!\p_{s}=\frac{P_s}{P} \p_{s} ,
\;\; \\& \nabla_{\p_{s}}\!\p_{y_\beta}=\frac{f'}{f} \p_{y_\beta},\;\; 2\leq \beta\leq n,
 \\& \nabla_{\p_{y_2}}\!\p_{y_2}=-\frac{ff'}{P^2}\p_s,\\&\hskip.5in \cdots 
 \\& \nabla_{\p_{y_n}}\!\p_{y_n}=-\frac{ff'}{P^2}\Bigg\{\prod_{\a=2}^{n-1}\cos^2 y_\a\! \Bigg\} \p_s
 +\sum_{\a=2}^{n-1}\Bigg\{\frac{\sin 2 y_\a}{2}\prod_{\gamma=\a+1}^{n-1}\cos^2 y_{\gamma}\Bigg\}\p_{y_\a},
   \\& \nabla_{\p_{y_\beta}}\!\p_{y_\gamma}=- (\tan {y_\beta}) \p_{y_\gamma},\;\;  2\leq\beta<\gamma\leq n.
 \end{aligned}\end{equation}

By applying \e{6.57} and a direct computation, we find
\begin{equation}\begin{aligned}\label{6.58} &K(\p_s,\p_{y_\beta})=\frac{f'(s)P'(s)-P(s)f''(s)}{f(s)P(s)^3},
\\& K(\p_{y_\beta},\p_{y_\gamma})=\frac{P^2(s)-f'(s)^2}{f^2(s)P^2(s)},\;\; 2\leq \beta,\gamma\leq n. \end{aligned}\end{equation}

On the other hand, it follows from \e{6.45} and the equation of Gauss that
\begin{equation}\begin{aligned}\label{6.59} &K(\p_s,\p_{y_\beta})=\k_1\k_2,
\;\;  K(\p_{y_\beta},\p_{y_\gamma})=\k_2^2,\;\; 2\leq \beta\ne\gamma\leq n. \end{aligned}\end{equation}
Thus we derive from \e{6.58} and \e{6.59} that
\begin{align}\label{6.60} &\k_1\k_2=\frac{f'(s)P'(s)-P(s)f''(s)}{f(s)P(s)^3},
\;\;  \k_2^2=\frac{P^2(s)-f'(s)^2}{f^2(s)P^2(s)}, \end{align}
which imply that $\k_1=\k_1(s)$ and $\k_2=\k_2(s)$.

\vskip.05in
\noindent {\it Case} (2.2.2.a.i): $P(s)>f'(s)$. We may put
\begin{align}\label{6.61}  \k_2=\frac{\sqrt{P^2(s)-f'(s)^2}}{f(s)P(s)}, \end{align}
From \e{6.60} and \e{6.61} we get
\begin{align}\label{6.62}  \k_1=\frac{f'P'-Pf''}{P^2\sqrt{P^2(s)-f'(s)^2}}. \end{align}

Let $L:M^n\to \mathbb E^{n+1}$ denote the isometric immersion of $M^n$ in $\mathbb E^{n+1}$. We derive from \e{6.45}, \e{6.56}, \e{6.57}, \e{6.61}, \e{6.62} and formula \e{2.1} of Gauss that
\begin{align} \label{6.63}& L_{ss}=\frac{P'}{P}L_s + \frac{f'P'-Pf''}{\sqrt{P^2-f'{}^2}}N,
\;\; \\&\label{6.64} L_{s y_\beta}=\frac{f'}{f} L_{y_\beta},\;\; \beta=2,\ldots,n,
 \\&\label{6.65}  L_{y_2 y_2}=-\frac{ff'}{P^2}\p_s + \frac{f\sqrt{P^2-f'{}^2}}{P}N,
  \\&\label{6.66} L_{y_\beta y_\beta}=-\frac{ff'}{P^2}\Bigg\{\prod_{\a=2}^{\beta-1}\cos^2 y_\a\! \Bigg\} L_s
 +\sum_{\a=2}^{\beta-1}\Bigg\{\frac{\sin 2 y_\a}{2}\prod_{\gamma=\a+1}^{\beta-1}\cos^2 y_{\gamma}\Bigg\}L_{y_\a}\\&\notag \hskip.5in + \frac{f\sqrt{P^2-f'{}^2}}{P}\(\prod_{\a=2}^{\beta-1}\cos^2
y_\a\!\) N,\;\; \beta=3,\ldots,n,
\\&\label{6.67} L_{y_\beta y_\gamma}=- (\tan {y_\beta}) L_{y_\gamma},\;\; 2\leq \beta<\gamma\leq n.  \end{align}  From \e{6.61}, \e{6.62} and formula of  Weingarten we also have
 \begin{equation}\begin{aligned} \label{6.68}& N_{s}=\frac{f'P'-Pf''}{P^2\sqrt{P^2-f'{}^2}} L_s,
 \;\; N_{y_\beta}=\frac{\sqrt{P^2-f'{}^2}}{fP}L_{y_\beta},\;\; \beta=2,\ldots,n. \end{aligned}\end{equation}

After solving equation \e{6.64} for $\beta=2,\ldots,n$, we find
\begin{align}\label{6.69}  L(s,y_2,\ldots,y_n)=A(s)+f(s)B(y_2,\ldots,y_n) \end{align}
for  $\mathbb E^{n+1}$-valued functions $A(s)$ and $B(y_2,\ldots,y_n)$. 
By applying \e{6.69}, we conclude after a long  computation that the solution of the PDE system \e{6.63}-\e{6.68} is \begin{equation}\begin{aligned} \label{6.70}& L=\cc_0\!\int^s\! \!\sqrt{P^2(t)-f'(t)^2}dt+f(x)\Big\{\cc_1\sin y_2+\cc_2 \cos y_2 \sin y_3+\cdots
 \\& \hskip.5in +\cc_{n-1}\sin y_{n-1}\prod_{\a=2}^{n-2}\cos y_{n-1}+\cc_n \prod_{\a=2}^{n-1}\cos y_\a\Big\}
 \end{aligned}\end{equation}
for some vectors $\cc_0,\ldots,\cc_n\in \mathbb E^{n+1}$. 
From \e{6.56} and \e{6.70} we know that $\cc_0,\ldots,\cc_n$ are orthonormal. Therefore the immersion of $M^n$ is congruent to 
 \begin{equation}\begin{aligned} \notag& L=\Bigg(\!\int^s \!\sqrt{P^2(t)-f'(t)^2}dt,f(x)\sin y_2,f(s) \cos y_2 \sin y_3,\cdots
 \\& \hskip.6in f(s)\sin y_{n-1}\prod_{\a=2}^{n-2}\cos y_{n-1}, f(s) \prod_{\a=2}^{n-1}\cos y_\a\Bigg). \end{aligned}\end{equation}

Now, by applying a suitable reparametrization of $s$, we conclude that $L$ takes the form of \e{4.3} in Lemma \ref{L:4.1}. Consequently, after applying Lemma \ref{L:4.1}, we know that $M^n$ is an open part of a hypersphere, which is a contradiction.

\vskip.05in
\noindent {\it Case} (2.2.2.a.ii): $P(s)<f'(s)$. In this case, by \e{6.60}, we may put 
\begin{align}\label{6.71}  \k_2=\frac{\sqrt{f'(s)^2-P^2(s)}}{f(s)P(s)}, \;\;  \k_1=\frac{f'P'-Pf''}{P^2\sqrt{f'(s)^2-P^2(s)}}. \end{align}
We derive from \e{6.45}, \e{6.57}, \e{6.71} and $$(\bar\nabla_{\p_{s}}h)(\p_{y_2},\p_{y_2})= (\bar\nabla_{\p_{y_2}}h)(\p_{s},\p_{y_2})$$ of Codazzi that  $f'(f'P'-Pf'')=0$. Since $\k_1,\k_2\ne 0$, \e{6.71} implies $f'P'-Pf''\ne 0$. So we get $f'=0$, which is impossible by  \e{6.71} since $\k_2$ is real. Consequently, this case is also impossible.

\vskip.05in
\noindent {\it Case} (2.2.2.b): {\it $f$ is a constant $b\ne 0$}. After applying a suitable dilation, we get $b=1$. Thus 
 $M^n$ is an ordinary twisted product whose metric tensor is
\begin{align}\label{6.72} g=P^2(s,y_2,\ldots,y_n) ds^2+ \left\{dy_2^2+\cos^2y_2dy_3^2+\cdots+\prod_{k=2}^{n-1}\cos^2
y_k dy^2_n\right\}. \end{align}
Hence the Levi-Civita connection $\nabla$ of $M^n$ satisfies 
 \begin{equation}\begin{aligned} \label{6.73}& \nabla_{\p_{s}}\!\p_{s}=\frac{P_s}{P} \p_{s}-P^2 \Bigg\{P_{y_2}\p_{y_2}+\sum_{\a=3}^n (\sec^2 y_2\cdots \sec^2 y_{\a-1})\p_{y_\a}\! \Bigg\} ,
\;\; \\& \nabla_{\p_{s}}\!\p_{y_\beta}=\frac{P_{y_\beta}}{P} \p_{s},\;\; 2\leq \beta\leq n,
 \\& \nabla_{\p_{y_2}}\!\p_{y_2}=0, 
\;\;  \nabla_{\p_{y_3}}\!\p_{y_3}=\frac{\sin 2y_2}{2}\p_{y_2},\\& \hskip.5in \cdots
 \\& \nabla_{\p_{y_n}}\!\p_{y_n}=\sum_{\a=2}^{n-1}\Bigg\{\frac{\sin 2 y_\a}{2}\prod_{\gamma=\a+1}^{n-1}\cos^2 y_{\gamma}\Bigg\}\p_{y_\a},
  \\& \nabla_{\p_{y_\beta}}\!\p_{y_\gamma}=- (\tan {y_\beta}) \p_{y_\gamma},\;\;  2\leq\beta<\gamma\leq n.
 \end{aligned}\end{equation}
 It follows from \e{6.45}, \e{6.73} and  $0=\<R(\p_s,\p_{y_\beta})\p_{y_\g},\p_s\>\!,\, \beta<\gamma,$  that 
\begin{align}\label{6.74} P_{y_\beta y_\gamma} +(\tan y_\beta)P_{y_\gamma}=0,\;\; 2\leq\beta<\gamma\leq n. \end{align}
After solving \e{6.74} we obtain
\begin{align}\label{6.75} P=A_1(s,y_2)+(\cos y_2)A_2(s,y_3)+\cdots+\(\prod_{\a=2}^{n-1}\cos y_\a\! \)\!  A_{n-1}(s,y_n). \end{align}
Also, it follows from \e{6.72} that 
\begin{align}\label{6.76}& K(\p_s,\p_{y_\beta})=-\frac{P_{y_\beta y_\beta}}{P},
\;\;   K(\p_{y_\beta},\p_{y_\gamma})=1,\;\; 2\leq \beta<\gamma\leq n. \end{align}
Thus we get
\begin{align}\label{6.77}&\k_1=-\frac{P_{y_\beta y_\beta}}{P},
\;\; \k_2 =1. \end{align}
From the first equation in \e{6.77} we find
\begin{align}\label{6.78}& P_{y_2y_2}=\cdots=P_{y_n y_n}. \end{align}
From \e{6.75} and \e{6.78} we have
\begin{align}\label{6.79}&P=A(s)+\cos y_2\cdots\cos y_{n-1}\{C(s)\sin y_{n}+D(s)\cos y_{n}\} .\end{align}
After applying a suitable translation in $y_n$ and a reparametrization of $s$,  \e{6.79} reduces to $P=A(s)+\prod_{\a=2}^n\cos y_\a$.
Hence \e{6.72} becomes
 \begin{align}\label{6.80} &g=\! \Bigg(\!A(s)\!+\! \prod_{\a=2}^n\cos y_\a\!\Bigg)^{\!2} \! ds^2
 + \Bigg\{dy_2^2+\cos^2y_2dy_3^2+\cdots+\prod_{k=2}^{n-1}\cos^2
y_k dy^2_n\Bigg\}. \end{align}
It follows from \e{6.77} and \e{6.80} that
\begin{align}\label{6.81}&\k_1=\frac{\prod_{\a=2}^{n}\cos y_{\a}}{A(s)+\prod_{\a=2}^n\cos y_\a},
\;\; \k_2 =1. \end{align}

Let $L:M^n\to \mathbb E^{n+1}$ be the isometric immersion of $M^n$ in $\mathbb E^{n+1}$. We derive from \e{6.45}, \e{6.72}, \e{6.73}, \e{6.75}, \e{6.79}, \e{6.81} and formula \e{2.1} of Gauss that
  \begin{equation}\begin{aligned} \label{6.82}& L_{ss}=\frac{A'}{A} L_{s}-\Big(\! A(s)+\prod_{\a=2}^n\cos y_\a \!\Big)^2\sum_{\beta=2}^n (\sec^2 y_2\cdots \sec^2 y_{\beta-1})L_{y_\beta}\\&\hskip.9in  -\Big(\! A(s)+\prod_{\a=2}^n\cos y_\a \!\Big)\prod_{\beta=2}^{n}\cos y_{\beta} N,
\;\; \\& L_{s y_\beta}=\frac{-\sin {y_\beta}\prod_{\a\ne \beta}\cos y_\a}{A(s)+\prod_{\a=2}^n\cos y_\a} L_{s},\;\; \beta=2,\ldots,n,  
 \\ & L_{y_2 y_2}= N,  
\\&L{y_3 y_3}=\frac{\sin 2y_2}{2} L_{y_2}+ \cos^2 y_2 N, \\&\hskip.8in \cdots
\\ & L_{y_n y_n}=\sum_{\a=2}^{n-1}\Bigg\{\frac{\sin 2 y_\a}{2}\prod_{\gamma=\a+1}^{n-1}\cos^2 y_{\gamma}\Bigg\}L_{y_\a} + \(\prod_{\a=2}^{n-1}\cos^2 y_\a\!\) N,
\\& L_{y_\beta y_\gamma}=- (\tan {y_\beta}) L_{y_\gamma},\;\; 2\leq \beta<\gamma\leq n. \end{aligned}\end{equation} 
Moreover,  from \e{6.81} and formula of  Weingarten we also have
 \begin{equation}\begin{aligned} \label{6.83.1}& N_{s}=-\frac{P_{y_\beta y_\beta}}{P} L_s,
\;\;  N_{y_\beta}=L_{y_\beta},\;\; \beta=2,\ldots,n. \end{aligned}\end{equation}
We obtain by solving the PDE system \e{6.82}-\e{6.83.1} after very long computation that
  \begin{equation}\begin{aligned} \label{6.83}& L(s,y_2,\ldots,y_n)=\int^s \!A(t)(\cc_1\cos t-\cc_2\sin t)dt+\cc_3\sin y_2+\cdots  \\&\hskip.2in+\cc_{n+1} \sin y_{n}\prod_{\a=2}^{n-1}\cos y_\a
 + (\cc_1\cos t+\cc_2\sin t)\(\prod_{\a=2}^n\cos y_\a\!\) 
\end{aligned}\end{equation}
for some vectors $\cc_1,\ldots,\cc_{n+1}\in \mathbb E^{n+1}$.
It follows from \e{6.72}, \e{6.80} and \e{6.83} that $\cc_1,\ldots,\cc_{n+1}$ are orthonormal vectors. Hence, after choosing  $$\cc_1=(1,0,\ldots,0),\ldots,\cc_{n+1}=(0,\ldots,0,1),$$ we obtain from \e{6.83} that
 \begin{equation}\begin{aligned} \label{6.84}& L=\Bigg(\! \cos s \prod_{\a=2}^n\cos y_\a \! +\! \int^s \!\!  A(t)\cos t\,  dt,\, \sin s \prod_{\a=2}^n\cos y_\a-\! \int^s\!\! A(t) \sin t\,dt,
\\&\hskip1.1in  \sin y_2,\cdots,  \sin y_{n}\prod_{\a=2}^{n-1}\cos y_\a\Bigg)
  \end{aligned}\end{equation}  
  From \e{6.80} we find
   \begin{equation}\begin{aligned}\label{6.85} & Ric(\p_{y_2},\p_{y_2})= 1+\frac{\prod_{\beta=2}^n\cos y_\beta}{A(s)+\prod_{\a=2}^n\cos y_\a}.  \end{aligned}\end{equation}

On the other hand, we derive from \e{6.80} and \e{6.84} that the second fundamental form $h$ of $M^n$ in $\mathbb E^{n+1}$ satisfies
   \begin{equation}\begin{aligned} \notag & h(\p_{y_2},\p_{y_2})= -\Bigg(\! \cos s\prod_{\a=2}^n\cos y_\a,\, \sin s \prod_{\a=2}^n\cos y_\a ,
  \sin y_2,\cdots, \sin y_{n}\prod_{\a=2}^{n-1}\cos y_\a\Bigg)
    \end{aligned}\end{equation}
Now, it is easy to verify  that $$\frac{Ric(\p_{y_2},\p_{y_2})-\<h(\p_{y_2},\p_{y_2}),L\>}{g_{22}}$$ cannot be equal to $\lambda-1$ for any constant $\lambda$. Consequently,  $(M^n,g,\x^T,\lambda)$ is not a Ricci soliton for any $\lambda$. Hence this case is impossible as well.
\end{proof}

 From the proof of Theorem \ref{T:6.1} we obtain the following.
 
 \begin{corollary} There do not exist steady or expanding Ricci solitons $(M^n,g,\x^T,\lambda)$ on Euclidean hypersurfaces.
 \end{corollary}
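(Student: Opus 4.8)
The plan is to reduce the statement to a single positivity assertion: since a Ricci soliton $(M^n,g,\x^T,\lambda)$ is steady or expanding precisely when $\lambda\le 0$, it suffices to show that $\lambda>0$ in \emph{every} case of the classification given in Theorem~\ref{T:6.1}. I would organize the verification along the dichotomy already built into the proof of Theorem~\ref{T:6.1}, treating the totally umbilical hypersurfaces (cases (1) and (2)) by a direct computation and the hypersurfaces with two distinct principal curvatures (cases (3)--(5)) by quoting the structure of that proof.

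For the non-umbilical hypersurfaces I would invoke the proof of Theorem~\ref{T:6.1} itself. There, under the assumption of two distinct principal curvatures, the argument splits into Case~(1), where $\lambda=1$, and Case~(2), where $\lambda\ne 1$; and \emph{every} sub-case of Case~(2) --- namely (2.1), (2.2.1), (2.2.2.a.i), (2.2.2.a.ii), and (2.2.2.b) --- was shown to be impossible. Consequently, whenever $M^n$ has two distinct principal curvatures one is forced into $\lambda=1>0$, so the resulting hypersurfaces in cases (3), (4), (5) are all shrinking.

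For the totally umbilical hypersurfaces I would compute $\lambda$ directly from the soliton criterion \e{4.1} of Theorem~\ref{T:4.1}. For a hyperplane through the origin the position field $\x$ is tangent, so $\x^\perp=0$ and $h=0$; as the hyperplane is flat we have $Ric=0$, and \e{4.1} reduces to $(\lambda-1)g=0$, giving $\lambda=1$. For a hypersphere of radius $r$ centered at the origin one has $Ric=\tfrac{n-1}{r^2}g$, while $\x^\perp=\x$ is normal and $h(X,Y)=-\tfrac1r g(X,Y)N$, so $\<h(X,Y),\x^\perp\>=-g(X,Y)$; substituting into \e{4.1} yields $\lambda=\tfrac{n-1}{r^2}>0$. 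Thus $\lambda>0$ in all five cases, and the nonexistence of steady and expanding solitons follows at once.

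The one genuinely delicate point I anticipate is the bookkeeping for the umbilical hypersphere. Unlike the other hypersurfaces, it is a \emph{trivial} (Einstein) soliton with $\x^T=0$, and its constant $\lambda=\tfrac{n-1}{r^2}$ is positive but in general different from $1$. This is exactly why the umbilical cases cannot be absorbed into the ``$\lambda\ne 1$ is impossible'' argument and must be checked by hand via \e{4.1}; once this separation is made, the strict positivity of $\lambda$ becomes uniform across the whole classification and the corollary is immediate.
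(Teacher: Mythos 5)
Your proposal is correct and is essentially the paper's own argument: the paper derives the corollary precisely by reading off from the proof of Theorem \ref{T:6.1} that every admissible soliton has $\lambda>0$ --- namely $\lambda=1$ whenever there are two distinct principal curvatures, since every subcase with $\lambda\ne 1$ was shown to be impossible, while the totally umbilical cases are the hyperplane and the hypersphere centered at the origin. Your direct computations via \eqref{4.1}, giving $\lambda=1$ for the hyperplane and $\lambda=(n-1)/r^2>0$ for the hypersphere (correctly noting that the latter is a trivial soliton with $\x^T=0$ and $\lambda$ generally different from $1$), simply make explicit what the paper leaves implicit in its one-line proof.
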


\noindent {\bf Acknowledgements.} This work is supported by NPST program of King Saud University Project Number 13-MAT1813-02. 
 Both authors thank the referee for his/her very careful reading of this paper and for providing useful suggestions for improving the presentation of this paper.


\begin{thebibliography}{10}

\bibitem{AAD} H. Al-Sodais, H. Alodan and S. Deshmukh, Hypersurfaces of Euclidean space as gradient Ricci solitons, {\it An. \c{S}tiin\c{t}. Univ. Al. I. Cuza Ia\c{s}i. Mat. (N.S.)} DOI:10.2478/aicu-2014-0009.

\bibitem{book73} B.-Y. Chen, {\it Geometry of Submanifolds}, Pure and Applied Mathematics, No. 22,  Mercer Dekker Inc., New York, 1973.

\bibitem{book81} B.-Y. Chen, {\it Geometry of Submanifolds and Its Application}, Science University of Tokyo, Tokyo, Japan, 1981.

\bibitem{C02} B.-Y. Chen, Geometry of position functions of Riemannian submanifolds in pseudo-Euclidean space, {\it J. Geom.} 74 (2002), no. 1-2, 61--77.

\bibitem{C03} B.-Y. Chen,  Constant-ratio space-like submanifolds in pseudo-Euclidean space, {\it Houston J. Math.} 29 (2003), no. 2, 281--294

\bibitem{book11} B.-Y. Chen, {\it Pseudo-Riemannian Geometry, $\delta$-invariants and Applications}, World Scientific, Hackensack, NJ, 2011.

\bibitem{book14} B.-Y. Chen, {\it Total Mean Curvature and Submanifolds of Finite Type, Second Edition},  World Scientific, Hackensack, NJ, 2014.

\bibitem{CD1} B.-Y. Chen and S. Deshmukh, Geometry of compact shrinking Ricci solitons, {\it Balkan J. Geom. Appl.} {\bf 19} (2014), no. 1, 13--21.

\bibitem{CD2} B.-Y. Chen and S. Deshmukh, Ricci solitons and concurrent vector fields,  arXiv:1407.2790, 2014.

\bibitem{CY} B.-Y. Chen and K. Yano, On submanifolds of submanifolds of a Riemannian manifold, {\it J. Math. Soc. Japan}, {\bf 23} (1971), no. 3, 548--554.

\bibitem{CK1} J. T. Cho and M. Kimura, Ricci solitons and real hypersurfaces in a complex space form, {\it Tohoku Math. J.} (2) {\bf 61} (2009), no. 2, 205--212.

\bibitem{CK2} J. T. Cho and M. Kimura, Ricci solitons of compact real hypersurfaces in K\"ahler manifolds, {\it Math. Nachr.} {\bf 284} (2011), no. 11-12, 1385--1393. 

\bibitem{CK3} J. T. Cho and M. Kimura, Ricci solitons on locally conformally flat hypersurfaces in space forms, {\it J. Geom. Phys.} {\bf 62} (2012), no. 8, 1882--1891.

\bibitem{Der} A. Derdzinski, {A Myers-type theorem and compact Ricci solitons}, {\it Proc. Amer. Math. Soc.} {\bf 134} (2006), no. 12, 3645--3648.

\bibitem{Des} S. Deshmukh, {Jacobi-type vector fields on Ricci solitons}, {\it Bull. Math. Soc. Sci. Math. Roumanie} {\bf 55(103)}  (2012), no. 1, 41--50.

\bibitem{H1} R. S. Hamilton, {The formation of singularities in the Ricci flow}, In: {\it Surveys in Differential Geometry}, Vol. {\bf II}, (1995), 7--136. 

\bibitem{H} S. Hiepko,  Eine innere Kennzeichnung der verzerrten Produkte, {\it Math. Ann.} {\bf 241} (1979), no. 3, 209--215.

\bibitem{LG} M. F. Lop\'ez and E. Garc\'ia-R\'io, {A remark on compact Ricci solitons}, {\it Math. Ann.} {\bf 340} (2008), no. 4, 893--896.

\bibitem{MT} J. Morgan and G. Tian, {\it Ricci Flow and the Poincar\'e Conjecture},  Clay Mathematics Monographs, Vol. {\bf 5}, Cambridge, MA, 2014.

\bibitem{P} G. Perelman, {\it The Entropy Formula for the Ricci Flow and Its Geometric Applications},  2002,  Zbl 1130.53001; arXiv math/0211159.

\bibitem{PR} R. Ponge and H. Reckziegel, Twisted products in pseudo-Riemannian geometry, {\it Geom. Dedicata}, {\bf 48} (1993), no. 1, 15--25.

\bibitem{U} B. \"Unal, Doubly warped products, {\it Differential Geom. Appl.} {\bf 15} (2001), no. 3, 253--263.



\end{thebibliography}
\end{document}